\newcommand{\be}{\begin{equation}}
\newcommand{\ee}{\end{equation}}
\newcommand{\ba}{\begin{eqnarray}}
\newcommand{\ea}{\end{eqnarray}}
\newcommand{\bas}{\begin{eqnarray*}}
\newcommand{\eas}{\end{eqnarray*}}
\renewcommand{\Pr}{{\mbox{\rm pr}}}
\newcommand{\Ps}{{\mbox{\rm pr}}}
\newcommand{\var}{{\mbox{var}}}
\newcommand{\bPhi}{\mbox{$\Phi$}}
\newcommand{\bSigma}{\mbox{\boldmath $\Sigma$}}
\newcommand{\bsSigma}{\mbox{\scriptsize \boldmath $\Sigma$}}
\newcommand{\bLambda}{\mbox{\boldmath $\Lambda$}}
\newcommand{\bsLambda}{\mbox{\boldmath \scriptsize $\Lambda$}}
\newcommand{\bmu}{\mbox{\boldmath $\mu$}}
\newcommand{\bsmu}{\mbox{\boldmath \scriptsize $\mu$}}
\newcommand{\bU}{\mbox{\bf U}}
\newcommand{\bI}{\mbox{\bf I}}
\newcommand{\szero}{\mbox{\boldmath \scriptsize $0$}}
\numberwithin{equation}{section}
\theoremstyle{plain}
\newtheorem{theorem}{Theorem}[section]
\theoremstyle{plain}
\newtheorem{lemma}{Lemma}[section]
\theoremstyle{remark}
\theoremstyle{remark}
\newtheorem{example}{Example}[section]
\begin{document}

\begin{frontmatter}
\title{Asymptotic coverage probabilities of bootstrap percentile confidence intervals for constrained parameters}
\runtitle{Coverage of bootstrap confidence intervals}

\begin{aug}
\author{\fnms{Chunlin} \snm{Wang}\thanksref{a}} 
\author{\fnms{Paul} \snm{Marriott}\thanksref{b}} 
\and
\author{\fnms{Pengfei}
\snm{Li}\thanksref{b,e3}%
\ead[label=e3,mark]{pengfei.li@uwaterloo.ca}%
}

\address[a]{Department of Statistics, School of Economics and Wang Yanan Institute for Studies in Economics, Xiamen University, Xiamen, 361005, China.
}

\address[b]{Department of Statistics and Actuarial Science, University of Waterloo, Waterloo, Ontario, N2L 3G1, Canada.
\printead{e3}
}

\runauthor{C. Wang, P. Marriott and P. Li}

\affiliation{Xiamen University and University of Waterloo}

\end{aug}

\begin{abstract}
The asymptotic behaviour of
the commonly used bootstrap percentile confidence interval is investigated
when the parameters are subject to linear inequality constraints.
We concentrate on the important one- and two-sample problems with data generated from general parametric distributions in the natural exponential family.
The focus of this paper is on quantifying the coverage probabilities of the parametric bootstrap percentile confidence intervals, in particular their limiting behaviour near boundaries.
We propose a local asymptotic framework
to study this  subtle coverage behaviour. Under this framework, we discover that
when the true parameters are on, or close to, the restriction boundary,
the asymptotic coverage probabilities can always exceed the nominal level in the one-sample case;
however, they can  be, remarkably, both under and over the nominal level in the two-sample case.
Using illustrative examples, we show that the results provide theoretical justification and guidance on applying the bootstrap percentile method to  constrained inference problems.
\end{abstract}

\begin{keyword}
\kwd{boundary constraint}
\kwd{local asymptotics}
\kwd{natural exponential family}
\kwd{ordering constraint}
\kwd{parametric bootstrap}
\kwd{pivotal quantity}
\end{keyword}

\end{frontmatter}

\section{Introduction}
\label{sec.intro}

This paper considers situations where  parameters of interest are restricted by linear inequality constraints and, in particular,  the effect of these constraints on bootstrap percentile confidence intervals.
The nature of these problems is motivated  by real applications.
For example,
\cite{Feldman1998} studied a signal plus noise problem from high energy physics  which can be modelled by a Poisson random variable with a {\em boundary constraint} on the mean parameter space.
Also, \cite{LiTaylorNan2010} provided a multiple-sample example from a pancreatic cancer biomarker study which involves {\em ordering constraints} on the parameter space of binomial probabilities.

To illustrate the issues under consideration, consider Figure~\ref{fig1.coverage.I}(a)  which shows, for a Poisson example, the exact finite sample coverage for the rate parameter, $\lambda$, with the constraint $\lambda \geq 2$ for a nominally $90\%$ bootstrap percentile confidence interval.  We see the actual coverage makes a step change near the constraint.
Figure~\ref{fig1.coverage.I}(b)  shows, for a two-sample Bernoulli example, with two proportions $p_1$ and $p_2$, the exact finite sample coverage for parameter $p_1$,  with the constraint $p_1\leq p_2$, for a nominally 90\% bootstrap percentile confidence interval.  We see the actual coverage has a subtle trend near the constraint boundary defined by $\Delta_0:=p_2-p_1$.
This paper examines these phenomenon  by examining  the asymptotic coverage.  We also see discreteness effects on top of the coverage functions due  to the finiteness of the samples used  in  calculation.

\begin{figure} 
\centering
\includegraphics[width=30pc]{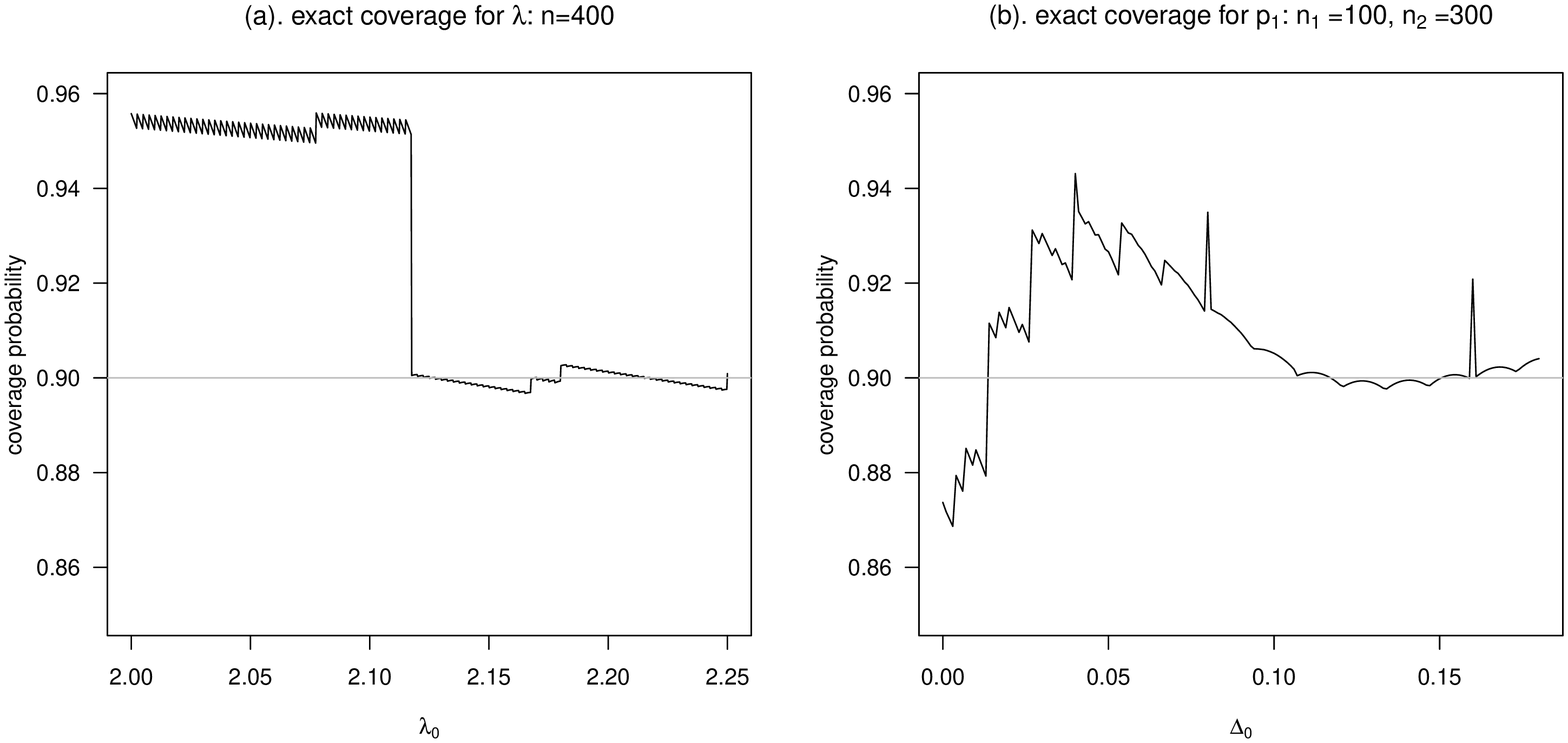}
\caption[]{Panel (a) shows exact coverage probability of bootstrap percentile confidence interval, for the  rate $\lambda$ of Poisson distribution in one-sample problem with $\lambda \geq 2$, as a function of true $\lambda_0$ and sample size $n=400$.
Panel (b) shows exact coverage probability of bootstrap percentile confidence interval, for the proportion $p_1$ of the first Bernoulli distribution in two-sample problem with $p_1\leq p_2$, as a function of true $\Delta_0=p_2-p_1$ and two sample sizes $n_1=100$ and $n_2=300$.
For both plots, we consider equal-tail intervals with 90\% nominal level. 
}
\label{fig1.coverage.I}
\end{figure}

Research on developing statistical methods and theory with such constraints has a long history; see \cite{Barlow1972}, \cite{Robertson1988}, and \cite{Silva2004}. 
The general consensus is that,
if the constraint information can be properly incorporated into the analysis,
we can expect to obtain more efficient and principled statistical inference results.

For regular parametric models, it is well-known that the likelihood ratio, score and Wald statistics
are asymptotically pivotal  under some classical regularity conditions.
This property guarantees
the  consistency of the resultant confidence intervals based on their limiting distributions.
With the inequality constraints,
the true values of the parameters may lie on the boundary of the parameter space,
which violates commonly used regularity conditions. 
As a consequence, 
these statistics 
are no longer asymptotically pivotal.
In some situations, their limiting distributions do not even have a simple analytic form.
We refer to \cite{Chernoff1954}, \cite{SelfLiang1987}, \cite{Andrews2001} and \cite{Molen2007} for general results and discussion. 


With the rapid advance in computing technology, using the bootstrap distributions  for these statistics  becomes a common alternative to construct confidence intervals. 
When the data distribution is known to follow a specific parametric model, it is natural to use the parametric bootstrap \citep{Lee1994}. 
Then a confidence interval for an unknown parameter can be constructed based on the percentiles of the bootstrap distribution of its maximum likelihood estimator.
This type of confidence interval is usually referred to as the {\em bootstrap percentile confidence interval} \citep{Efron1993}. 
It is widely used in practice because of its simplicity to implement \citep{Hall1988}. 
Throughout this paper, we always refer to the usual bootstrap in which the bootstrap sample size equals to the total sample size.

The theoretical properties of the bootstrap method when applied to regular parametric models have been well  documented; see \cite{Hall1992}, \cite{Shao1995} and \cite{Davison1997}, among others.
A natural question is how does the bootstrap method perform in the nonregular situations with inequality constrained parameter space?
Some pioneer work, including \cite{Andrews1997,Andrews2000}, showed that
the bootstrap distribution is inconsistent with the sampling distribution
of the maximum likelihood estimator of the constrained parameter when it is on the boundary. 
\cite{Drton2011} discussed the application of bootstrap method to hypothesis testing with the likelihood ratio, concluding  that its asymptotic size can be below or above the nominal level.
However, the behaviour of bootstrap confidence intervals for constrained parameters still seems unclear, especially when the true parameters
are near the restriction boundary.

The primary goal of this paper concerns   investigating the coverage probabilities of  
confidence intervals constructed by the  bootstrap percentile method using asymptotic methods.
We study the important one- and two-sample problems with data generated from general class of parametric distributions in the natural exponential family.
We evaluate the reliability of the bootstrap percentile confidence interval by answering the following specific questions:
(i) Can it achieve nominal coverage?
(ii) If not, does it over- or under-cover the true value of the parameter?
(iii) How can we more appropriately quantify the asymptotic coverage probability? 

The paper is organized as follows.
In Section \ref{sec.case1}, we investigate the one-sample problem with data generated from distributions in the natural exponential family and the mean parameter is subject to a boundary constraint.
In Section \ref{sec.case2}, we investigate the two-sample problem with data that also come from distributions in the natural exponential family and the two mean parameters are subject to an ordering constraint.
Under a local asymptotic framework, we quantify the asymptotic coverage probabilities of the bootstrap percentile confidence intervals for each of the one- and two-sample problems in Sections \ref{sec.case1} and \ref{sec.case2}, respectively.
For presentational convenience, proofs for Sections \ref{sec.case1} and \ref{sec.case2} are given in Sections \ref{sec3.one-sample} and \ref{sec4.two-sample}, respectively.

\section{One-sample natural exponential family}
\label{sec.case1}

\subsection{Problem setup}
We first consider the one-sample problem with a linear inequality constraint on the mean parameter space. 
Suppose $X_1,\ldots,X_n$ is a random sample from a general parametric distribution in the natural exponential family with probability density function  or probability mass function
\be
\label{expfamily}
f(x; \theta) = a(x)\exp\left\{ \psi x-b(\psi) \right\},
\ee
where $\psi$ is the natural parameter, and $\theta=E(X_1)=b'(\psi)$ represents the mean parameter.
It is assumed that $b(\cdot)$ is twice continuously differentiable with $b''(\psi)>0$.
Let $\sigma^2=b''(\psi)$ be the variance of $X_1$ under $f(x; \theta)$.
The parameter space of $\theta$ is constrained in $\mathcal{C}_1=\{\theta: \theta\geq d\}$ for some fixed boundary $d$.
We aim to quantify the coverage probability of the bootstrap percentile confidence interval for $\theta$. 

Based on $n$ random observations from (\ref{expfamily}), the log-likelihood function of $\theta$,
up to a constant not dependent on $\theta$,
is
$
l_n(\theta)=\psi\sum_{i=1}^nX_i-nb(\psi).
$
Then the maximum likelihood estimator of $\theta$ is defined as
$
\hat\theta_n=\arg\max_{\theta\in \mathcal{C}_1 }l_n(\theta).
$
The following lemma finds the closed form of  $\hat\theta_n$.

\begin{lemma}
\label{lem3_bootCI}
Suppose $X_1,\ldots,X_n$ is a random sample from  $f(x;\theta)$ defined in (\ref{expfamily}).
The maximum likelihood estimator of $\theta$ subject to the constraint in $\mathcal{C}_1$ is $\hat \theta_n=\max(\bar X_n , d)$ where $\bar X_n=\sum_{i=1}^n X_{i}/n$.
\end{lemma}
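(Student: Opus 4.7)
The plan is to show that the log-likelihood $l_n(\theta)$ is strictly concave (or at least unimodal) in $\theta$ with unconstrained maximizer $\bar X_n$, so that imposing the constraint $\theta \geq d$ produces $\max(\bar X_n, d)$ by the standard argument for monotone reductions to the boundary.

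First, I would reparametrize the score in terms of $\theta$. Using the fact that $\theta=b'(\psi)$ and $b''(\psi)=\sigma^2>0$, the map $\psi\mapsto\theta$ is a strictly increasing $C^1$ bijection with $d\psi/d\theta=1/\sigma^2$. Differentiating $l_n(\theta)=\psi\sum_{i=1}^n X_i - n b(\psi)$ through the chain rule yields
\[
\frac{d l_n}{d\theta} \;=\; \frac{d\psi}{d\theta}\Bigl(\sum_{i=1}^n X_i - n b'(\psi)\Bigr) \;=\; \frac{n}{\sigma^2}\bigl(\bar X_n - \theta\bigr).
\]
So the score has the same sign as $\bar X_n-\theta$, which immediately gives that $l_n$ is strictly increasing on $(-\infty,\bar X_n)$ and strictly decreasing on $(\bar X_n,\infty)$, with a unique unconstrained maximizer at $\theta=\bar X_n$.

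Next, I would split into two cases according to where $\bar X_n$ sits relative to $d$. If $\bar X_n\geq d$, then $\bar X_n\in\mathcal C_1$, so the unconstrained maximizer is feasible and $\hat\theta_n=\bar X_n$. If $\bar X_n<d$, then $l_n$ is strictly decreasing on $[d,\infty)\subset(\bar X_n,\infty)$, so the constrained maximizer on $\mathcal C_1$ is the left endpoint $\theta=d$. Combining the two cases gives $\hat\theta_n=\max(\bar X_n,d)$ as claimed.

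There is no real obstacle here; the only subtlety is to justify that the derivative identity is valid on the interior of the natural parameter space and that the unimodality is strict, both of which follow from $b''(\psi)=\sigma^2>0$. I would include a one-line remark that the assumption $b''>0$ in the problem setup guarantees strict concavity of $l_n$ as a function of $\psi$, hence unimodality in $\theta$, which is the only fact needed for the case analysis.
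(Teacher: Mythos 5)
Your proof is correct, but it takes a genuinely different route from the paper's. The paper does not differentiate anything: it notes that $\mathcal{C}_1$ is a closed convex set and invokes Proposition 2.4.3 of the cited monograph of Silvapulle and Sen, by which the constrained maximum likelihood estimator in a natural exponential family over a closed convex set is the minimizer of $(\bar X_n-\theta)^2$ over that set, i.e.\ the projection of $\bar X_n$ onto $\mathcal{C}_1$; projecting onto the half-line $[d,\infty)$ gives $\max(\bar X_n,d)$ in one line. You instead derive the score identity $dl_n/d\theta=n(\bar X_n-\theta)/\sigma^2$ from the chain rule and $b''>0$, conclude strict unimodality of $l_n$ in $\theta$ with peak at $\bar X_n$, and finish by a two-case analysis. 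Both arguments are valid. Yours is self-contained and more elementary, requiring no external proposition; the paper's buys reusability, since the same projection device is what handles the two-sample ordered-means problem in Lemma~\ref{lem7_boot}, where the constrained estimator is a weighted least-squares projection and a one-dimensional monotonicity argument no longer suffices. One small point to add to your write-up: the score identity holds only for $\theta$ in the interior of the mean-parameter space, so when $\bar X_n$ falls on or outside that space (e.g.\ all observations equal to zero in a Poisson model) you should remark that $l_n$ is then monotone on the feasible set and the same case analysis still yields $\hat\theta_n=\max(\bar X_n,d)$.
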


Next, we construct the bootstrap percentile confidence interval of $\theta$ based on $\hat\theta_n$.
Let $X_1^*,\ldots,X_n^*$
denote a parametric bootstrap sample from the model $f(x;\hat\theta_n)$, for given $\hat\theta_n$,
and let
$\hat\theta^*_n=\max(\bar X_n^{*} , d)$
denote the maximum likelihood estimator of $\theta$
based on the bootstrap sample, where $\bar X_n^*=\sum_{i=1}^nX_i^*/n$.
Further, let
$
G_{n}^*(x;\hat\theta_n)= \Ps(\hat\theta_n^*\leq x \mid \hat\theta_n)
$
be the bootstrap distribution function of ${\hat\theta_n}$,
and  $q^*_{\alpha}$
be the $\alpha$-quantile of $G_{n}^*(x;\hat\theta_n)$.
Then $[q^*_{\alpha_1},q^*_{1-\alpha_2}]$ is called a nominally $100(1-\alpha)\%$ level bootstrap percentile confidence interval of $\theta$, and its \emph{coverage probability} is defined to be
\bas
\Pr\left(\theta_0\in[q^*_{\alpha_1},q^*_{1-\alpha_2}]\right),
\eas
where $\theta_0$ is the true value of $\theta$, $\alpha=\alpha_1+\alpha_2$ with $\alpha_1,\alpha_2\in(0, 0.5)$,  and $\Pr(\cdot)$ indicates the probability under $f(x;\theta_0)$.

Under the general parametric model setup, the explicit form of the coverage probability of
the bootstrap percentile confidence interval of $\theta$ is typically unavailable, or has to be derived case by case.
Therefore, it is of interest to quantify the asymptotic coverage probability of the bootstrap percentile confidence interval for $\theta$ as $n\to\infty$.

\subsection{An asymptotic framework} 
\label{sec.asy.framework}

Our asymptotic framework is motivated by a simple, but generic, example discussed
in \cite{Andrews1997,Andrews2000}.
%

\begin{example}[Normal with nonnegative mean]
\label{one-sample-normal}
Suppose $X_1, \ldots,X_n$ is a random sample from a normal distribution with mean $\theta$ and unit variance.
The parameter space of $\theta$ is constrained to be  $\{\theta:\theta\geq0\}$. 
Define 
$y^{+}=\max(y,0)$.
From Lemma \ref{lem3_bootCI},  the maximum likelihood estimator of $\theta$
is
$
\hat\theta_n
=\bar X_n^{+}
$.
Furthermore,
$n^{1/2}(\hat\theta_n-\theta_0)$ has the same distribution as $\max( Z, - n^{1/2}\theta_0 )$, where $Z$ is a standard normal random variable with zero mean and unit variance.
%
In this special case, we are able to calculate the \emph{exact} coverage probability of a bootstrap percentile confidence interval for $\theta$ as follows: 
\ba
\label{prop1_bootCI}
\Pr\left(\theta_0\in[q^*_{\alpha_1},q^*_{1-\alpha_2}]\right)
&=&
\left\{
\begin{array}{ll}
1-\alpha_1-\alpha_2, \quad &  n^{1/2}\theta_0>\Phi^{-1}(1-\alpha_2)\\
1-\alpha_1, \quad &  n^{1/2}\theta_0\leq\Phi^{-1}(1-\alpha_2)
\end{array}
\right.,
\ea
where $\Phi(\cdot)$ is the cumulative distribution function of a standard normal random variable.
\end{example}

We comment that the exact coverage probability in (\ref{prop1_bootCI}) is a piecewise constant function of $\theta_0$ with a step change at $n^{-1/2}\Phi^{-1}(1-\alpha_2)$, for given $n$ and $\alpha_2$ values.
Also, from (\ref{prop1_bootCI}), we can see that the coverage for $\theta$
depends on how close the true value $\theta_0$ is to the boundary.
The magnitude of closeness of $\theta_0$ to the boundary crucially depends on the sample size by an order of ${n}^{1/2}$.

As an illustration, Figure~\ref{fig2.coverage.I}(a) plots the exact coverage probabilities versus the true value of $\theta_0$ for different sample sizes $n$ at level $1-\alpha=0.90$ with $\alpha_1=\alpha_2=\alpha/2$.
We see that the bootstrap percentile confidence interval may behave conservatively in terms of its coverage.
Specifically, when the true value $\theta_0$ is on, or close to, the boundary, over-coverage can happen.
To further verify the trend that we have seen for the normal example, recall that, in Figure~\ref{fig1.coverage.I}(a), we also plot the exact coverage probability of the bootstrap percentile confidence interval for the rate parameter $\lambda$ of Poisson distribution with $\lambda\geq 2$ at level $1-\alpha=0.90$ with $\alpha_1=\alpha_2=\alpha/2$ for  $n=400$.
Due to the discrete nature of the Poisson distribution, we can not always expect the exact coverage probability to achieve the nominal level.
Hence, we would hope to more appropriately quantify the observed exact coverage phenomenon in an asymptotically meaningful way.
Motivated by above discussions, 
we  adopt  a local asymptotic framework
by allowing the true constrained parameter to vary in a $n^{-1/2}$-neighbourhood of the boundary.
More precisely, we let $\theta_0=\theta_{0,n}=d+\tau n^{-1/2}$.
The corresponding local parameter $\tau=n^{1/2}(\theta_{0,n} - d)$ controls the order of closeness of $\theta_{0,n}$ approaching to the boundary $d$.
This framework helps capture the subtle asymptotic coverage behaviour of the bootstrap percentile confidence interval in terms of $\tau$ for the general distributions in the natural exponential family with a constrained parameter space. 

\begin{figure} 
\centering
\includegraphics[width=30pc]{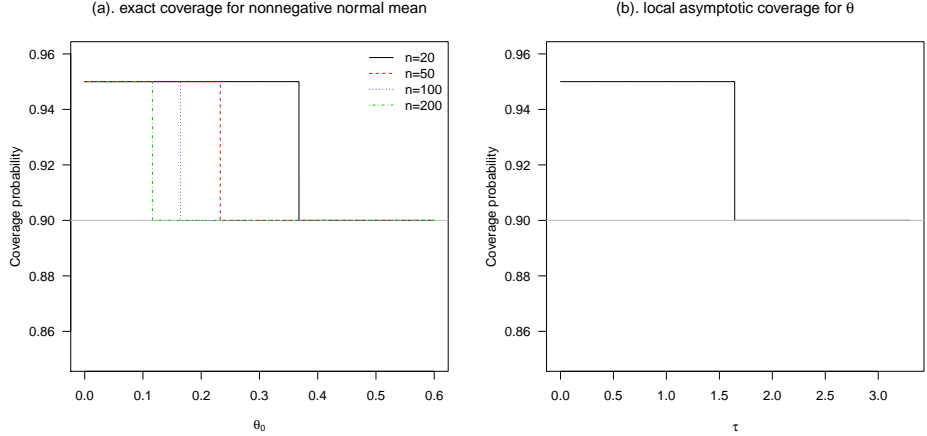}
\caption[]{Panel (a) shows exact coverage probability of bootstrap percentile confidence interval, for the  nonnegative mean parameter of normal distribution with unit variance, as a function of true $\theta_0$. The lines are for $n=20$ (solid), $50$ (dash), $100$ (dot) and $200$ (dot-dash).
Panel (b) shows quantified local asymptotic coverage probability of bootstrap percentile confidence interval, for the boundary constrained $\theta$ of general distributions in the one-sample natural exponential family with $\sigma_0=1$, as a function of $\tau$.
For both plots, we set $1-\alpha=0.90$ with $\alpha_1= \alpha_2=0.05$.}
\label{fig2.coverage.I}
\end{figure}

\subsection{General results}
\label{sec2.main}

Under the local asymptotic framework proposed in Section \ref{sec.asy.framework}, we first study the asymptotic distribution of the maximum likelihood estimator $\hat \theta_n$.

\begin{lemma}
\label{lem4_bootCI}
Suppose $X_1,\ldots,X_n$ is a random sample from $f(x;\theta)$ defined in (\ref{expfamily}),
and the true value of $\theta$ is  $\theta_{0,n}=d+\tau n^{-1/2}$ with $\tau$ being a fixed nonnegative local parameter not depending on $n$. Let $\sigma_0^2=b''(\psi_0)$ with $\psi_0=b'^{-1}(d)$.
Then, as $n\to\infty$, we have
\bas
{ n^{1/2}(\hat\theta_n-\theta_{0,n})}/{\sigma_0}
&\to&
\max\left( Z, -{\tau}/{\sigma_0} \right),
\eas
in distribution, where $Z$ is a standard normal random variable.
\end{lemma}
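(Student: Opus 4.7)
The plan is to reduce the statement to a triangular-array central limit theorem via the closed form of $\hat\theta_n$ given in Lemma \ref{lem3_bootCI} and then invoke the continuous mapping theorem. Specifically, using $\hat\theta_n=\max(\bar X_n,d)$ and the identity $\max(a,b)-c=\max(a-c,b-c)$, I would write
\begin{equation*}
\frac{n^{1/2}(\hat\theta_n-\theta_{0,n})}{\sigma_0}
= \max\!\left\{ \frac{n^{1/2}(\bar X_n-\theta_{0,n})}{\sigma_0},\ \frac{n^{1/2}(d-\theta_{0,n})}{\sigma_0}\right\}
= \max\!\left\{ \frac{n^{1/2}(\bar X_n-\theta_{0,n})}{\sigma_0},\ -\frac{\tau}{\sigma_0}\right\},
\end{equation*}
since $n^{1/2}(d-\theta_{0,n})=-\tau$ by definition of the local parameter. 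The second argument of the maximum is a fixed constant, so the entire convergence reduces to showing
$n^{1/2}(\bar X_n-\theta_{0,n})/\sigma_0 \Rightarrow Z$.

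The delicate point is that $\bar X_n$ is an average of i.i.d.\ observations drawn from $f(\cdot;\theta_{0,n})$, a distribution that itself depends on $n$. I would therefore invoke a triangular-array CLT. Let $\psi_{0,n}=b'^{-1}(\theta_{0,n})$ and $\sigma_n^2=b''(\psi_{0,n})$; since $b'$ is strictly monotone (because $b''>0$) and continuously differentiable, and since $\theta_{0,n}\to d$, we have $\psi_{0,n}\to\psi_0$ and, by continuity of $b''$, $\sigma_n^2\to\sigma_0^2$. The cleanest route is to compute the characteristic function of $n^{1/2}(\bar X_n-\theta_{0,n})/\sigma_0$ directly by noting that the moment generating function of $X_1$ under $f(\cdot;\theta_{0,n})$ equals $\exp\{b(\psi_{0,n}+t)-b(\psi_{0,n})\}$, which is analytic in $t$ in a neighbourhood of the origin that can be chosen uniformly in $n$ for $n$ large (since $\psi_{0,n}\to\psi_0$). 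A second-order Taylor expansion in $t/n^{1/2}$ then gives, uniformly on compact sets of $t$,
\begin{equation*}
\left[\,E\exp\!\left\{\mathrm{i}t\,\frac{X_1-\theta_{0,n}}{\sigma_0 n^{1/2}}\right\}\right]^{n}
\to \exp(-t^2/2),
\end{equation*}
which identifies $Z$ as the weak limit. (An equivalent Lindeberg-Feller verification using uniform boundedness of third moments across $n$ would also work.)

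Combining the two pieces by Slutsky's theorem and the continuous mapping theorem applied to $(u,v)\mapsto\max(u,v)$, which is continuous everywhere, yields the claimed limit $\max(Z,-\tau/\sigma_0)$. The only substantive obstacle is the uniform control needed to legitimise the Taylor expansion for the triangular array; this is handled by exploiting the analyticity of the cumulant generating function $b$ in the natural exponential family, so essentially no new technical machinery is required beyond the standard i.i.d.\ CLT.
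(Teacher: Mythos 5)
Your proposal is correct and follows essentially the same route as the paper's proof: decompose $\max(\bar X_n,d)-\theta_{0,n}$ into a maximum of two terms, note that the second term equals $-\tau n^{-1/2}$ exactly, apply a triangular-array central limit theorem together with $\sigma_n^2\to\sigma_0^2$ and Slutsky's theorem, and finish with the continuous mapping theorem. The only difference is that you spell out a justification of the triangular-array CLT via the cumulant generating function $b$, a step the paper simply asserts.
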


Using Lemmas \ref{lem3_bootCI} and \ref{lem4_bootCI},
we quantify the local asymptotic coverage probability of bootstrap percentile confidence interval for $\theta$ in the following theorem.

%
%

\begin{theorem}
\label{prop3_bootCI}
Under the same setup and assumptions as in Lemma \ref{lem4_bootCI}, as $n\rightarrow\infty$, we have
\bas
\Pr\left(\theta_{0,n}\in[q^*_{\alpha_1},q^*_{1-\alpha_2}]\right)
&\to&
\left\{
\begin{array}{ll}
1-\alpha_1-\alpha_2,\quad& \tau >\Phi^{-1}(1-\alpha_2)\sigma_0\\
1-\alpha_1,\quad& \tau < \Phi^{-1}(1-\alpha_2)\sigma_0
\end{array}
\right..
\eas
\end{theorem}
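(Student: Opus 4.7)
The plan is to reduce the coverage probability to a probability about the bootstrap cumulative distribution function $G_n^*(\cdot;\hat\theta_n)$ evaluated at the true value $\theta_{0,n}$, and then to identify the limiting law using Lemma~\ref{lem4_bootCI} together with a conditional central limit theorem. Using the standard quantile--distribution duality $\{q_\alpha^*\le x\}=\{G_n^*(x;\hat\theta_n)\ge\alpha\}$, I would first write
\bas
\Pr(\theta_{0,n}\in[q_{\alpha_1}^*,q_{1-\alpha_2}^*])
=\Pr\bigl(\alpha_1\le G_n^*(\theta_{0,n};\hat\theta_n)\le 1-\alpha_2\bigr)+o(1),
\eas
the $o(1)$ absorbing the effect of strict versus non-strict inequalities near the eventual atom of the limit law. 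Because $\theta_{0,n}\ge d$, Lemma~\ref{lem3_bootCI} gives $G_n^*(\theta_{0,n};\hat\theta_n)=\Pr(\bar X_n^{*}\le\theta_{0,n}\mid\hat\theta_n)$, so the truncation at $d$ is invisible on this event.

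Next, I would apply a conditional Berry--Esseen bound to the bootstrap mean. Given $\hat\theta_n$, $\bar X_n^*$ has conditional mean $\hat\theta_n$ and variance $\hat\sigma_n^2/n$ with $\hat\sigma_n^2=b''(b'^{-1}(\hat\theta_n))$; since $\hat\theta_n\to d$ in probability and $b''\circ b'^{-1}$ is continuous, the relevant third moment stays bounded in probability, so
\bas
G_n^{*}(\theta_{0,n};\hat\theta_n)
=\Phi\!\left(\frac{n^{1/2}(\theta_{0,n}-\hat\theta_n)}{\hat\sigma_n}\right)+o_p(1).
\eas
By Lemma~\ref{lem4_bootCI}, the fact that $\hat\sigma_n/\sigma_0\to 1$ in probability, Slutsky's theorem, and the continuous mapping theorem, I obtain $G_n^{*}(\theta_{0,n};\hat\theta_n)\Rightarrow \Phi(\min(-Z,\tau/\sigma_0))$.

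Finally, I would compute $\Pr\{\Phi^{-1}(\alpha_1)\le\min(-Z,\tau/\sigma_0)\le\Phi^{-1}(1-\alpha_2)\}$ in closed form. Since $\alpha_1\in(0,1/2)$ and $\tau\ge 0$, we have $\Phi^{-1}(\alpha_1)<0\le\tau/\sigma_0$, so the lower inequality reduces to $Z\le\Phi^{-1}(1-\alpha_1)$. For the upper inequality I split cases: if $\tau<\Phi^{-1}(1-\alpha_2)\sigma_0$ it is automatic and the limit is $1-\alpha_1$; if $\tau>\Phi^{-1}(1-\alpha_2)\sigma_0$ it further requires $Z\ge\Phi^{-1}(\alpha_2)$, giving $1-\alpha_1-\alpha_2$. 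The main obstacle I anticipate is justifying the uniform normal approximation for $G_n^{*}$ given that the bootstrap law of $\hat\theta_n^*$ itself has a non-vanishing atom at $d$ when $\bar X_n\le d$; this is defused by the observation above that, above the atom, $G_n^{*}$ coincides with the unrestricted bootstrap CDF of $\bar X_n^{*}$, where the Berry--Esseen argument applies cleanly.
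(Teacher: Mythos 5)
Your proposal is correct and follows essentially the same route as the paper's proof: both reduce the coverage to a statement about the bootstrap distribution of $\bar X_n^*$ (the truncation at $d$ being harmless since $\theta_{0,n}\ge d$), invoke a uniform conditional normal approximation with $\hat\sigma_n\to\sigma_0$ in probability, and then apply Lemma~\ref{lem4_bootCI} at the continuity points of the limit law, excluding the threshold $\tau=\Phi^{-1}(1-\alpha_2)\sigma_0$. The only cosmetic difference is that you work on the probability scale with $G_n^*(\theta_{0,n};\hat\theta_n)$ while the paper inverts to the quantile scale via $\bar H_n^{*-1}(\alpha;\hat\theta_n)=\Phi^{-1}(\alpha)+o_p(1)$; the two are linked by the duality $\{q_\alpha^*\le x\}=\{G_n^*(x;\hat\theta_n)\ge\alpha\}$ that you use, and your version is in fact the form the paper itself adopts in proving the two-sample Theorem~\ref{prop4_bootCI}.
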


This asymptotic result generalizes the exact finite sample result in Example \ref{one-sample-normal} for the normal distribution to cover the natural exponential family of distributions.
The result is plotted in Figure~\ref{fig2.coverage.I}(b) versus the local parameter $\tau$ at level $1-\alpha=0.90$ with $\alpha_1=\alpha_2=0.05$ and  $\sigma_0=1$.
We note that  different choices of $\alpha_1$ and $\alpha_2$ should give different graphs.
If such an approximation remains good for finite $n$, then this local asymptotic result provides us with information on how likely we are to have conservative conclusions, when $\theta_{0,n}$ is shrinking close to $d$.

%

\subsection{Illustrative examples}
\label{sec2.examples}

In Section \ref{sec2.main}, we quantified the local asymptotic coverage probabilities of bootstrap percentile confidence intervals for boundary constrained parameters when the data come from  the general class of distributions in the natural exponential family.
In many applications, as seen in the introduction, observations may come from specific parametric distributions,  such as Poisson or binomial, in which their mean parameters are subject to linear inequality constraints.
To make the asymptotic result of Theorem \ref{prop3_bootCI} practically useful, we illustrate with two examples in this section.
In these examples, we compare the \emph{exact} coverage probabilities calculated under specific parametric models and the quantified \emph{asymptotic} coverage probabilities.

\begin{example}[One-sample Poisson example] 
\label{case3.pois}
Suppose $X_{1},\ldots,X_{n}$ is a random sample from a $Poisson(\lambda)$ distribution with mean $\lambda$ subject to constraint $\lambda\in[d, \infty)$ with $d>0$.
For illustration, we consider $d=2$.
Under this setup, $\sigma_0^2 =d$.

Suppose the true value of $\lambda$ is $\lambda_{0,n}=2+ \tau n^{-1/2}$.
Applying Theorem \ref{prop3_bootCI},
the local asymptotic coverage probability of the $100(1-\alpha)\%$ bootstrap percentile confidence interval of $\lambda$ is
\bas
\lim_{n\to\infty}
\Pr\left(\lambda_{0,n}\in[q^*_{\alpha_1},q^*_{1-\alpha_2}]\right)
&=&
\left\{
\begin{array}{ll}
1-\alpha_1-\alpha_2, \quad &  \tau > \Phi^{-1}(1-\alpha_2)2^{1/2} \\
1-\alpha_1, \quad &  \tau< \Phi^{-1}(1-\alpha_2)2^{1/2} \end{array}
\right..
\eas

The exact coverage probability of the $100(1-\alpha)\%$ bootstrap percentile confidence interval for $\lambda$ can also be calculated by noting the fact that a sum of independent Poisson random variables  still has a Poisson  distribution.
In Figure~\ref{fig3.coverage.pois},  we plot the asymptotic and exact coverage probabilities as functions of the true parameter $\lambda_0=\lambda_{0,n}$.
For  comparison, we also add the exact coverage probabilities of the bootstrap percentile confidence interval for $\lambda$ without using constraint.

\begin{figure} 
\centering
\includegraphics[width=30pc]{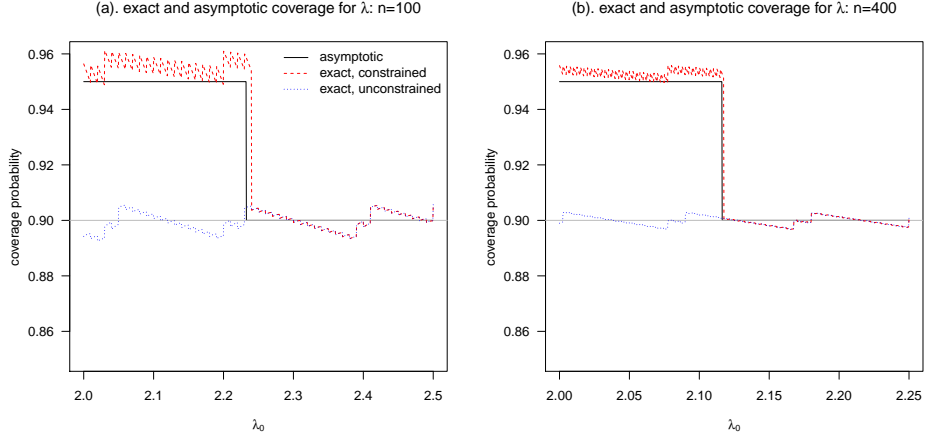}
\caption[]{Exact and asymptotic coverage probabilities of the bootstrap percentile confidence interval, for the rate $\lambda$ of Poisson distribution with $\lambda \geq 2$, as a function of $\lambda_0$.
The lines are for quantified local asymptotic coverage probability (solid), exact finite sample coverage probability (dash) and exact finite sample coverage probability without using constraint (dot).
Panel (a) is for $n=100$, and  Panel (b) is for $n=400$.
For both plots, we set $1-\alpha=0.90$ with $\alpha_1= \alpha_2=0.05$.
}
\label{fig3.coverage.pois}
\end{figure}

In Figure~\ref{fig3.coverage.pois}, we observe chaotic behaviour with oscillation phenomenon of the coverage probabilities due to the discrete nature of the Poisson distribution.
Hence, we can not always expect these coverage probabilities to achieve the nominal level due to the discrete nature of the Poisson distribution.
In general, we can see a clear trend that the quantified local asymptotic coverage probability shows close agreement with the exact finite sample coverage probability, as functions of $\lambda_0$, especially when the sample size increases.
\end{example}

\begin{example}[One-sample binomial example] 
\label{one-sample-binom}
Suppose $X_{1},\ldots,X_{n}$ is a random sample from a $Binomial(m,p)$ distribution with known $m$ and the proportion $p$ subject to constraint $ p\in[d, 1)$ with $0<d<1$.
For illustration, we consider $m=1$ and $d=0.5$.
Under the this setup, $\sigma_0^2=d(1-d)=0.25$.

Suppose the true value of $p$ is $p_{0,n}=0.5+ \tau n^{-1/2}$.
Applying Theorem \ref{prop3_bootCI}, the local asymptotic coverage probability of the $100(1-\alpha)\%$ bootstrap percentile confidence interval of $p$ is
\bas
\lim_{n\to\infty}
\Pr\left(p_{0,n}\in[q^*_{\alpha_1},q^*_{1-\alpha_2}]\right)
&=&
\left\{
\begin{array}{ll}
1-\alpha_1-\alpha_2, \quad &  \tau > 0.5\Phi^{-1}(1-\alpha_2) \\
1-\alpha_1, \quad &  \tau< 0.5\Phi^{-1}(1-\alpha_2)
\end{array}
\right..
\eas

The exact coverage probability of the $100(1-\alpha)\%$ bootstrap percentile confidence interval for $p$ can also be calculated by noting the fact that a sum of independent binomial random variables still has a binomial distribution.
Combining these results, in Figure~\ref{fig4.coverage.binom}, we plot the asymptotic and exact coverage probabilities of the bootstrap percentile confidence interval as functions of $p_0=p_{0,n}$.
For comparison, we also add the exact coverage probability of the bootstrap percentile confidence interval for $p$ without using constraint.

\begin{figure} 
\centering
\includegraphics[width=30pc]{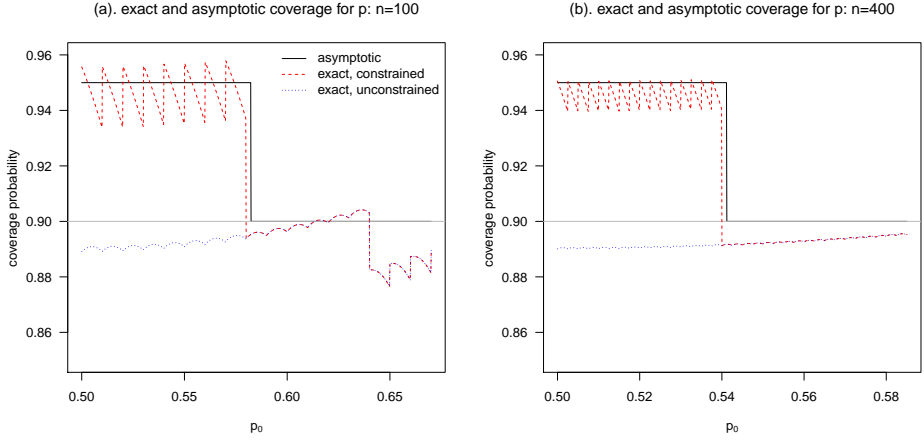}
\caption[]{Exact and asymptotic coverage probabilities of the bootstrap percentile confidence interval, for the proportion $p$ of binomial distribution with $p \in[0.5,1)$, as a function of $p_0$.
The lines are for quantified local asymptotic coverage probability (solid), exact finite sample coverage probability (dash) and exact finite sample coverage probability without using constraint (dot).
Panel (a) is for $n=100$, and  Panel (b) is for $n=400$.
For both plots, we set $1-\alpha=0.90$ with $\alpha_1= \alpha_2=0.05$.
}
\label{fig4.coverage.binom}
\end{figure}

Again, due to the discrete nature of the binomial distribution, we can not always expect the quantified coverage probabilities to achieve the nominal level, even for the exact unconstrained case.
In general, from Figure~\ref{fig4.coverage.binom}, we can observe a close agreement between the quantified asymptotic local coverage probability and the exact coverage probability, as functions of $p_0$, especially when the sample size increases.
\end{example}

\section{Two-sample natural exponential family}
\label{sec.case2}

\subsection{Problem setup}

In this section, we consider the  two-sample problem when data come from general distributions in the natural exponential family with their means subject to a linear ordering constraint.
Suppose we have random observations $X_{11},\ldots,X_{1n_1}$ from $f(x;\theta_1)$,
and independently, we have random observations $X_{21},\ldots,X_{2n_2}$ from
$f(x;\theta_2)$,
where  $f(x;\theta_i)$ satisfies (\ref{expfamily})
and $\theta_i$ still represents the mean parameter, $i=1,2$.
The parameter space of $(\theta_1,\theta_2)$ is defined to be $\mathcal{C}_2=\{(\theta_1,\theta_2):\theta_1 \leq \theta_2\}$.
Our goal is to quantify the asymptotic coverage probabilities of bootstrap percentile confidence intervals for $\theta_1$, $\theta_2$, as well as their difference $\Delta:=\theta_2-\theta_1$.

As a first step, we identify the form of the maximum likelihood estimator of $(\theta_1,\theta_2,\Delta)$.
For an asymptotic analysis, we let $\omega=n_1/n$ with $n=n_1+n_2$ and assume that $\omega\in(0,1)$ does not depend on $n$.
%
%
%
%
%
The following lemma finds the explicit forms of the  maximum likelihood estimators of $\theta_1$, $\theta_2$ and $\Delta$.
\begin{lemma}
\label{lem7_boot}
Suppose $X_{11},\ldots,X_{1n_1}$ is a random sample from $f(x;\theta_1)$,
and independently, $X_{21},\ldots,X_{2n_2}$ is another random sample from $f(x;\theta_2)$,
with $f(x;\theta_i)$ defined in (\ref{expfamily}), $i=1,2$.
Define $\bar X_{ni}=\sum_{j=1}^{n_i}X_{ij}/n_i$, $i=1,2$.
Then,  subject to the constraint in $\mathcal{C}_2$, the maximum likelihood estimators of $\theta_1$ and $\theta_2$
are
$$
\hat\theta_{n1}=\min\left\{\bar X_{n1},\omega \bar X_{n1}+(1-\omega)\bar X_{n2} \right\},
\quad
\hat\theta_{n2}=\max\left\{\bar X_{n2},\omega \bar X_{n1}+(1-\omega)\bar X_{n2} \right\},
$$
and the maximum likelihood estimator of $\Delta$ is $\hat\Delta_n=(\bar X_{n2}-\bar X_{n1})^{+}$.
\end{lemma}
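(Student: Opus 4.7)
The plan is to derive the constrained maximum likelihood estimators via a case analysis that hinges on whether the unconstrained MLEs already satisfy the ordering constraint, exploiting the strict concavity of the natural exponential family log-likelihood.

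First, I would write the joint log-likelihood of the two independent samples. Using the parameterization in (\ref{expfamily}), it decomposes as
\[
l_n(\theta_1,\theta_2) \;=\; \psi_1 \sum_{j=1}^{n_1} X_{1j} - n_1 b(\psi_1) + \psi_2 \sum_{j=1}^{n_2} X_{2j} - n_2 b(\psi_2),
\]
where $\psi_i = b'^{-1}(\theta_i)$. Since $b''(\psi) > 0$, the function is strictly concave in $(\psi_1,\psi_2)$, and the monotone change of variables $\theta_i = b'(\psi_i)$ transfers this concavity to $(\theta_1,\theta_2)$. The unconstrained maximizers are $\bar X_{n1}$ and $\bar X_{n2}$, by the same argument used in Lemma~\ref{lem3_bootCI}.

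I would then split into two cases. In the first case, $\bar X_{n1} \le \bar X_{n2}$, so the unconstrained maximizer already lies in $\mathcal{C}_2$ and hence is the constrained MLE. Here the stated formulas reduce correctly: since $\omega \bar X_{n1} + (1-\omega)\bar X_{n2}$ lies between $\bar X_{n1}$ and $\bar X_{n2}$, the minimum with $\bar X_{n1}$ equals $\bar X_{n1}$ and the maximum with $\bar X_{n2}$ equals $\bar X_{n2}$. In the second case, $\bar X_{n1} > \bar X_{n2}$, the unconstrained maximizer violates the constraint. By strict concavity of $l_n$ on the convex feasible set $\mathcal{C}_2$, the constrained maximum must lie on the active boundary $\{\theta_1 = \theta_2\}$. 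Restricting to this line and setting $\theta_1 = \theta_2 = \theta$ pools the two samples, so the objective becomes $\psi(n_1 \bar X_{n1} + n_2 \bar X_{n2}) - n b(\psi)$, whose maximizer is $\theta = \omega \bar X_{n1} + (1-\omega) \bar X_{n2}$. Checking the formulas in this case, since $\omega \bar X_{n1} + (1-\omega)\bar X_{n2}$ now lies between $\bar X_{n2}$ and $\bar X_{n1}$, the stated min and max both collapse to $\omega \bar X_{n1} + (1-\omega)\bar X_{n2}$.

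Finally, $\hat\Delta_n = \hat\theta_{n2} - \hat\theta_{n1}$ equals $\bar X_{n2} - \bar X_{n1}$ in the first case and $0$ in the second, which combine to $(\bar X_{n2} - \bar X_{n1})^+$. The only non-routine step is the justification that the constrained optimum lies on the boundary in the second case; this I would argue via strict concavity (any interior candidate can be improved by moving toward the unconstrained optimum along the feasible direction), which is a standard KKT-style observation but worth stating explicitly since the boundary of $\mathcal{C}_2$ is a full affine subspace rather than a single point.
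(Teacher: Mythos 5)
Your argument is correct in substance but takes a genuinely different route from the paper. The paper invokes Proposition 2.4.3 of \citet{Silva2004}, which reduces the constrained MLE in a natural exponential family to the weighted least-squares projection minimizing $n_1(\bar X_{n1}-\theta_1)^2+n_2(\bar X_{n2}-\theta_2)^2$ over $\mathcal{C}_2$; it then solves that projection in one stroke via the orthogonal reparameterization $\eta=\omega\theta_1+(1-\omega)\theta_2$, $\Delta=\theta_2-\theta_1$, under which the objective separates and the constraint $\Delta\ge 0$ acts on a single coordinate, yielding $\hat\Delta_n=(\bar X_{n2}-\bar X_{n1})^+$ directly. Your case analysis is more elementary and self-contained (no external projection result), at the cost of an explicit boundary argument; the paper's route is shorter, avoids cases entirely, and produces the $\Delta$-estimator as an immediate by-product rather than by subtraction.

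One step in your write-up needs repair: the claim that the monotone change of variables $\theta_i=b'(\psi_i)$ ``transfers'' concavity from $(\psi_1,\psi_2)$ to $(\theta_1,\theta_2)$ is false in general --- concavity is not preserved under monotone reparameterization of the domain. Fortunately you do not need concavity in $\theta$-space. Since $b'$ is strictly increasing, the constraint $\theta_1\le\theta_2$ is equivalent to $\psi_1\le\psi_2$, which is a closed convex set in the natural parameter space where $l_n$ \emph{is} strictly concave; your entire case analysis (unconstrained optimum feasible, versus optimum forced onto the active face $\psi_1=\psi_2$, i.e.\ $\theta_1=\theta_2$) therefore goes through verbatim in $(\psi_1,\psi_2)$-coordinates, and the conclusions translate back through $\theta_i=b'(\psi_i)$. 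With that substitution the proof is complete.
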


%

Similarly to the discussion in Section \ref{sec.asy.framework},
fixing $(\theta_{10},\theta_{20})$ and $\Delta_0$ pointwisely
may not be helpful to reveal the subtle asymptotic coverage behaviour of the bootstrap percentile confidence intervals.
Instead we proceed by considering the following local asymptotic framework:
$$
\theta_{10}=\theta_{10,n}=\eta_0-(1-\omega)\Delta_{0,n},
\quad
\theta_{20}=\theta_{20,n}=\eta_0+\omega\Delta_{0,n},
$$
where $\Delta_{0,n}=\delta  n^{-1/2}$.
Here $\eta_0$ is a fixed value, and $\delta$ is  a fixed, nonnegative, local parameter not depending on $n$.
Under this framework, we fix the true value of $\omega\theta_{10,n}+(1-\omega)\theta_{20,n}$, i.e. the overall mean of two samples,
to be $\eta_0$, and allow the true value of constrained mean difference, $\Delta_{0,n}=\theta_{20,n}-\theta_{10,n}$,
to vary in a $n^{-1/2}$-neighbourhood of 0.

\subsection{General results}

Under this proposed local asymptotic framework, we next study the asymptotic distributions of the maximum likelihood estimators $\hat\theta_{n1}$, $\hat\theta_{n2}$ and $\hat\Delta_{n}$ in the following lemma.

\begin{lemma}
\label{lem8_boot}
Consider the same setup  and assumptions  as in Lemma \ref{lem7_boot}.
Assume the true value of $(\theta_1,\theta_2)$ is $(\theta_{10,n},\theta_{20,n})\in \mathcal{C}_2$ with $\theta_{10,n}=\eta_0-(1-\omega)\Delta_{0,n}$ and $\theta_{20,n}=\eta_0+\omega\Delta_{0,n}$,
where  $\Delta_{0,n}=\delta  n^{-1/2}$, $\eta_0$ is a fixed parameter, and $\delta$ is a fixed nonnegative local parameter not depending on $n$.
Let $\sigma_{0}^2=b''(\psi_{0})$ with $\psi_{0}=b'^{-1}(\eta_0)$.
Then, as $n\to\infty$, we have
\bas
{n^{1/2} (\hat\theta_{n1}-\theta_{10,n})}/{\sigma_{0}}
&\to&
\min\left\{ \omega^{-1/2}Z_1, \omega^{1/2}Z_1+(1-\omega)^{1/2}Z_2 + {(1-\omega)\delta}/{\sigma_0} \right\} ,  \\
{n^{1/2}(\hat\theta_{n2}-\theta_{20,n})}/{\sigma_{0}}
&\to&
\max\left\{ (1-\omega)^{-1/2}Z_2, \omega^{1/2}Z_1+(1-\omega)^{1/2}Z_2 - {\omega \delta}/{\sigma_0} \right\} , \\
{n^{1/2} (\hat\Delta_n-\Delta_{0,n})}/{\sigma_{0}}
&\to&
\max\left\{ (1-\omega)^{-1/2}Z_2-\omega^{-1/2}Z_1, -{\delta}/{\sigma_0} \right\}, 
\eas
in distribution, where $Z_1, Z_2$ are two independent standard normal random variables.
\end{lemma}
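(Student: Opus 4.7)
The plan is to reduce everything to a joint central limit theorem for the two sample means and then apply the continuous mapping theorem, since the MLEs from Lemma~\ref{lem7_boot} are continuous functions of $\bar X_{n1}$ and $\bar X_{n2}$ built from $\min$, $\max$, and affine maps. Accordingly, I would first establish the following joint limit: under the local framework with $\theta_{i0,n}\to \eta_0$,
\begin{equation*}
\bigl( n^{1/2}(\bar X_{n1}-\theta_{10,n})/\sigma_0,\; n^{1/2}(\bar X_{n2}-\theta_{20,n})/\sigma_0 \bigr) \;\to\; \bigl( \omega^{-1/2} Z_1,\; (1-\omega)^{-1/2} Z_2 \bigr)
\end{equation*}
in distribution, with $Z_1,Z_2$ independent standard normals. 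Because the sampling distributions themselves depend on $n$ through $\theta_{i0,n}$, this is a triangular-array CLT; I would justify it using a Lyapunov or Lindeberg condition, exploiting the continuity of $b''$ (so $\sigma_{i,n}^2=b''(\psi_{i,n})\to \sigma_0^2$) and the fact that moments in a natural exponential family are locally uniformly bounded in an open neighbourhood of $\psi_0$. Independence of the two components follows from independence of the two samples, and the scaling factors $\omega^{-1/2},(1-\omega)^{-1/2}$ come from writing $n^{1/2}=\omega^{-1/2}n_1^{1/2}=(1-\omega)^{-1/2}n_2^{1/2}$.

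Next I would rewrite each MLE so that it is a continuous function of the two centred, rescaled sample means. For $\hat\theta_{n1}$, decompose
\begin{equation*}
\omega\bar X_{n1}+(1-\omega)\bar X_{n2}-\theta_{10,n}=\omega(\bar X_{n1}-\theta_{10,n})+(1-\omega)(\bar X_{n2}-\theta_{20,n})+(1-\omega)\Delta_{0,n},
\end{equation*}
so that after multiplying by $n^{1/2}/\sigma_0$ the deterministic piece becomes $(1-\omega)\delta/\sigma_0$. Combining this with the trivial expression for $\bar X_{n1}-\theta_{10,n}$ yields
\begin{equation*}
n^{1/2}(\hat\theta_{n1}-\theta_{10,n})/\sigma_0=\min\bigl\{U_{n1},\,\omega U_{n1}+(1-\omega)U_{n2}+(1-\omega)n^{1/2}\Delta_{0,n}/\sigma_0\bigr\},
\end{equation*}
where $U_{ni}:=n^{1/2}(\bar X_{ni}-\theta_{i0,n})/\sigma_0$. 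An entirely analogous decomposition handles $\hat\theta_{n2}$ (with the sign of the deterministic piece reversed and $\min$ replaced by $\max$), and for $\hat\Delta_n=(\bar X_{n2}-\bar X_{n1})^{+}$ I write
\begin{equation*}
n^{1/2}(\hat\Delta_n-\Delta_{0,n})/\sigma_0=\max\bigl\{U_{n2}-U_{n1},\,-n^{1/2}\Delta_{0,n}/\sigma_0\bigr\}.
\end{equation*}

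Since $n^{1/2}\Delta_{0,n}/\sigma_0=\delta/\sigma_0$ for every $n$, Slutsky's theorem combined with the continuous mapping theorem applied to the $\min$ and $\max$ operations immediately gives the three claimed limits, with $\omega^{-1/2}Z_1$ and $(1-\omega)^{-1/2}Z_2$ in place of $U_{n1},U_{n2}$. I expect the only genuinely delicate step to be the triangular-array CLT in the first paragraph: one must check that the varying underlying parameter $\theta_{i0,n}$ does not spoil the Lindeberg condition, which is where the smoothness of $b$ and the local nature of $\theta_{i0,n}\to\eta_0$ are essential. All subsequent manipulations are algebraic rearrangements together with continuous mapping, and present no further obstacle.
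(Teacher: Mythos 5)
Your proposal is correct and follows essentially the same route as the paper: a triangular-array CLT giving the joint limit $(U_{n1},U_{n2})\to(\omega^{-1/2}Z_1,(1-\omega)^{-1/2}Z_2)$ (the paper standardizes by $\sigma_{i,n}$ and then uses $\sigma_{i,n}^2\to\sigma_0^2$ with Slutsky), followed by exactly the same algebraic decomposition in which the deterministic offset $\eta_0-\theta_{10,n}=(1-\omega)\Delta_{0,n}$ contributes $(1-\omega)\delta/\sigma_0$, and the continuous mapping theorem applied to the $\min$/$\max$ expressions. Your extra remark about verifying the Lindeberg condition via local uniform boundedness of moments near $\psi_0$ is a point the paper leaves implicit, but it does not change the argument.
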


%
%

Let $X_{ij}^*$'s be the bootstrap sample
such that $X_{11}^*,\ldots,X_{1n_1}^*$ is a random sample from $f(x;\hat\theta_{n1})$ for given $\hat\theta_{n1}$,
and independently, $X_{21}^*,\ldots,X_{2n_2}^*$ is a random sample from $f(x;\hat\theta_{n2})$ for given $\hat\theta_{n2}$.
Define $\bar X_{ni}^*=\sum_{j=1}^{n_i}X_{ij}^*/n_i$, $i=1,2$.
Further, let
\bas
\hat\theta_{n1}^*=\min\{\bar X_{n1}^*,\omega \bar X_{n1}^*+(1-\omega)\bar X_{n2}^*\},
\quad
\hat\theta_{n2}^*=\max\{\bar X_{n2}^*,\omega \bar X_{n1}^*+(1-\omega)\bar X_{n2}^*\}, 
\eas
and $\hat\Delta_n^*=(\bar X_{n2}^*-\bar X_{n1}^*)^{+}$ be the maximum likelihood estimators of $\theta_1$, $\theta_2$ and $\Delta$, respectively, based on the bootstrap sample.
Denote the bootstrap distributions of $\hat\theta_{n1}$,
$\hat\theta_{n2}$ and $\hat\Delta_{n}$, respectively, by
\bas
G_{n1}^*(x;\hat\theta_{n1},\hat\theta_{n2})&=&\Ps(\hat\theta_{n1}^*\leq x\mid\hat\theta_{n1},\hat\theta_{n2} ), \\
G_{n2}^*(x;\hat\theta_{n1},\hat\theta_{n2})&=&\Ps(\hat\theta_{n2}^*\leq x\mid\hat\theta_{n1},\hat\theta_{n2} ),\\
G_{n,\Delta}^*(x;\hat\theta_{n1},\hat\theta_{n2})&=&\Ps(\hat\Delta_{n}^*\leq x\mid\hat\theta_{n1},\hat\theta_{n2} ),
\eas
and their $\alpha$-quantiles by $q_{1,\alpha}^*$, $q_{2,\alpha}^*$
and $q_{\Delta,\alpha}^*$.
%

Further, let $\Phi_{(\bsmu,\bsSigma)}(x,y)$
denote the joint cumulative distribution function
of a bivariate normal random vector with mean vector $\bmu$
and variance-covariance matrix $\bSigma$, and
let $F_{12}(x,y)$ denote the joint cumulative distribution function
of
$$
\min\left\{ \omega^{-1/2}Z_1, \omega^{1/2}Z_1+(1-\omega)^{1/2}Z_2 + {(1-\omega)\delta}/{\sigma_0} \right\} ,
$$
and
$$\max\left\{ (1-\omega)^{-1/2}Z_2, \omega^{1/2}Z_1+(1-\omega)^{1/2}Z_2 - {\omega \delta}/{\sigma_0} \right\} .
$$
That is, based on Lemma \ref{lem8_boot},
$F_{12}(x,y)$ is the joint limiting distribution of
of
$
n^{1/2}(\hat\theta_{n1}-\theta_{10,n})/\sigma_0
$
and
$
n^{1/2}(\hat\theta_{n2}-\theta_{20,n})/\sigma_0.
$
Also, define matrices
\bas
\bLambda_1=
\left( \begin{array}{cc}
 1 & \omega^{1/2} \\
 \omega^{1/2} & 1 \\
\end{array}\right),
\quad
\bLambda_2=
\left( \begin{array}{cc}
 1 & (1-\omega)^{1/2} \\
 (1-\omega)^{1/2} & 1 \\
\end{array}\right).
\eas

In the next theorem, we quantify the local asymptotic coverage probabilities of the bootstrap
percentile confidence intervals for $\theta_1$, $\theta_2$ and $\Delta$.

\begin{theorem}
\label{prop4_bootCI}
Under the same setup and assumptions as in Lemma \ref{lem8_boot}, as $n\to\infty$,
we have \\
(a)
\bas
\Pr\left(\theta_{10,n}\in[q^*_{1,\alpha_1},q^*_{1,1-\alpha_2}]\right)
&\to&
\iint  I\{\alpha_1\leq g_1(x,y)\leq1-\alpha_2\} dF_{12}(x,y) ,
\eas
where
$
g_1(x,y)
=
\Phi\left\{-C_{11}(x)\right\} + \Phi\left\{-C_{12}(x,y) \right\} - \bPhi_{(\szero,\bsLambda_1)}\left\{-C_{11}(x), -C_{12}(x,y) \right\}
$
with
$C_{11}(x)=\omega^{1/2}x$ and
$C_{12}(x,y)=\omega x+ (1-\omega)y+ (1-\omega)\delta/\sigma_0$; \\
(b)
\bas
\Pr\left(\theta_{20,n}\in[q^*_{2,\alpha_1},q^*_{2,1-\alpha_2}]\right)
&\to&
\iint I\{\alpha_1\leq g_2(x,y)\leq1-\alpha_2\} dF_{12}(x,y) ,
\eas
where
$
g_2(x,y)
= \bPhi_{(\szero,\bsLambda_2)}\left\{-C_{21}(y),  -C_{22}(x,y) \right\}
$
with
$C_{21}(y)=(1-\omega)^{1/2}y$ and
$C_{22}(x,y)=\omega x+ (1-\omega)y- \omega\delta/\sigma_0$; \\
(c)
\bas
\Pr\left(\Delta_{0,n}\in[q^*_{\Delta,\alpha_1},q^*_{\Delta,1-\alpha_2}]\right)
&\to&
\left\{
\begin{array}{ll}
1-\alpha_1-\alpha_2,\quad&  \delta > \Phi^{-1}(1-\alpha_2)\sigma_0/\{\omega(1-\omega)\}^{1/2}\\
1-\alpha_1,\quad&  \delta < \Phi^{-1}(1-\alpha_2)\sigma_0/\{\omega(1-\omega)\}^{1/2}
\end{array}
\right..
\eas
\end{theorem}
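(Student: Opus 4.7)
The plan is to recast each coverage probability as an event on the bootstrap CDF evaluated at the true parameter, and then invoke the joint weak convergence from Lemma \ref{lem8_boot}. For $i\in\{1,2\}$ and for $\Delta$, the quantile inversion
$$\Pr\bigl(\theta_{i0,n} \in [q^*_{i,\alpha_1},\, q^*_{i,1-\alpha_2}]\bigr) = \Pr\bigl(\alpha_1 \leq G_{ni}^*(\theta_{i0,n};\hat\theta_{n1}, \hat\theta_{n2}) \leq 1-\alpha_2\bigr) + o(1)$$
reduces the task to identifying the limit distribution of $G^*_{ni}(\theta_{i0,n};\hat\theta_{n1},\hat\theta_{n2})$. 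Since $\hat\theta_{n1}$ and $\hat\theta_{n2}$ are both consistent for the common value $\eta_0$ under the local framework, a conditional bootstrap central limit theorem gives that $\bigl(n_1^{1/2}(\bar X_{n1}^* - \hat\theta_{n1})/\sigma_0,\; n_2^{1/2}(\bar X_{n2}^* - \hat\theta_{n2})/\sigma_0\bigr)$ converges in distribution, in probability given the data, to two independent $N(0,1)$ variables.

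Part (c) is essentially the one-sample result. The estimator $\hat\Delta_n = (\bar X_{n2} - \bar X_{n1})^+$ has exactly the one-sample boundary form of Lemma \ref{lem3_bootCI} with $d = 0$, applied to the linear statistic $\bar X_{n2} - \bar X_{n1}$ whose asymptotic standard deviation is $\sigma_0/\{\omega(1-\omega)\}^{1/2}$, and $\hat\Delta_n^*$ inherits the same limiting bootstrap variance. Running the proof of Theorem \ref{prop3_bootCI} verbatim with the replacements $\sigma_0 \mapsto \sigma_0/\{\omega(1-\omega)\}^{1/2}$ and $\tau \mapsto \delta$ then yields the stated two-case piecewise limit.

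For parts (a) and (b) I would expand the bootstrap CDF using the min/max forms from Lemma \ref{lem7_boot}. Set $A^* = \bar X_{n1}^*$ and $B^* = \omega\bar X_{n1}^* + (1-\omega)\bar X_{n2}^*$, so that $\hat\theta_{n1}^* = \min(A^*, B^*)$, and let $\mathbb{P}^*$ abbreviate the bootstrap conditional probability given $(\hat\theta_{n1}, \hat\theta_{n2})$. Inclusion-exclusion gives
$$G_{n1}^*(x;\hat\theta_{n1}, \hat\theta_{n2}) = \mathbb{P}^*(A^* \leq x) + \mathbb{P}^*(B^* \leq x) - \mathbb{P}^*(A^* \leq x,\, B^* \leq x).$$
Evaluating at $x = \theta_{10,n}$, standardizing by the shifts $\hat\theta_{n1}$ and $\omega\hat\theta_{n1} + (1-\omega)\hat\theta_{n2}$ with scale $\sigma_0 n^{-1/2}$, and applying the conditional bivariate CLT, the three terms converge to $\Phi\{-C_{11}(X)\}$, $\Phi\{-C_{12}(X,Y)\}$, and $\bPhi_{(\szero,\bsLambda_1)}\{-C_{11}(X),\, -C_{12}(X,Y)\}$, where $(X,Y)$ is the joint weak limit from Lemma \ref{lem8_boot} with CDF $F_{12}$. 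This is precisely $g_1(X,Y)$, and the continuous mapping theorem delivers the integral in (a). Part (b) is analogous but simpler: $\hat\theta_{n2}^* = \max(\bar X_{n2}^*, B^*)$ gives directly $G_{n2}^*(x) = \mathbb{P}^*(\bar X_{n2}^* \leq x,\, B^* \leq x)$, which after the same standardization converges to $g_2(X,Y) = \bPhi_{(\szero,\bsLambda_2)}\{-C_{21}(Y),\, -C_{22}(X,Y)\}$.

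The main obstacle is controlling the boundary behaviour. Under the local asymptotics the event $\hat\theta_{n1} = \hat\theta_{n2}$ has positive limiting probability, so $G^*_{n1}$ and $G^*_{n2}$ each have a jump at the common constrained value, and the quantile-to-CDF inversion in the first display is not automatic. I would close this gap by showing that the random variable $g_i(X,Y)$ is continuously distributed at the critical levels $\alpha_1$ and $1-\alpha_2$, which reduces to verifying that $F_{12}$ assigns zero mass to the level sets $\{g_i = \alpha_1\}\cup\{g_i = 1-\alpha_2\}$; this follows from the continuity and strict monotonicity of $g_i$ on the region where $(X,Y)$ has a density under $F_{12}$. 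A secondary technicality is to make the conditional bootstrap CLT uniform in the standardized arguments so that the continuous mapping theorem can be applied; this is routine given $b'' > 0$.
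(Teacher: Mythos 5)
Your proposal is correct and follows essentially the same route as the paper's proof: quantile inversion to the event $\{\alpha_1\leq G_{ni}^*(\theta_{i0,n};\hat\theta_{n1},\hat\theta_{n2})\leq 1-\alpha_2\}$, inclusion--exclusion on the min/max form of the bootstrap estimators, a uniform conditional bootstrap central limit theorem, and convergence of the plug-in arguments to the law $F_{12}$, with part (c) reduced to the one-sample argument of Theorem \ref{prop3_bootCI}. Your explicit centering of the bootstrap statistics at $\hat\theta_{n1}$ and $\omega\hat\theta_{n1}+(1-\omega)\hat\theta_{n2}$ (which is what makes the limit a functional of the constrained-estimator limits carrying the law $F_{12}$, rather than of the raw standardized sample means) and your attention to possible atoms of $g_i$ at the levels $\alpha_1$ and $1-\alpha_2$ are, if anything, slightly more careful than the paper's own write-up on both points.
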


It appears that explicit expressions for the local asymptotic coverage probabilities of $\theta_1$ and $\theta_2$ are not available.
Fortunately, the quantified coverage probabilities of $\theta_1$ and $\theta_2$ are written in terms of bivariate integrals that can be easily evaluated using numerical methods.
Moreover, we note that the asymptotic coverage results only depend on the ratio $\omega$ and the local parameter $\delta$ which actually controls the true mean difference $\Delta_{0,n}$.
Figure \ref{fig5.coverage.II} plots the asymptotic coverage probabilities for $\theta_1$, $\theta_2$ and $\Delta$ versus the local parameter $\delta$, in the case of $\omega=0.1, 0.2, 0.3$ and $0.5$ at level $1-\alpha=0.90$ with $\alpha_1=\alpha_2=0.05$ as an illustration.
Again, we note that  different choices of $\alpha_1$ and $\alpha_2$ can lead to different coverage behaviours of the confidence intervals, and hence give different graphs.

\begin{figure} 
\centering
\includegraphics[width=32pc]{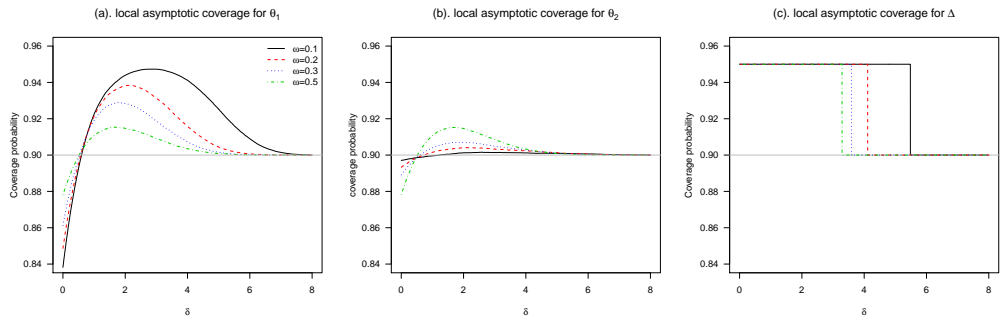}
\caption[]{
Local asymptotic coverage probabilities of bootstrap percentile confidence intervals, for $\theta_1$ in Panel (a), $\theta_2$ in Panel (b) and $\Delta=\theta_2-\theta_1$ in Panel (c) of distributions in the two-sample natural exponential family with ordering constrained  means and $\sigma_0=1$, as functions of $\delta$.
The lines are for $\omega=0.1$ (solid), $0.2$ (dash), $0.3$ (dot) and $0.5$ (dot-dash).
For all plots, we set $1-\alpha=0.90$ with $\alpha_1= \alpha_2=0.05$.
}
\label{fig5.coverage.II}
\end{figure}

%

From Figure~\ref{fig5.coverage.II}, we observe that the asymptotic coverage probabilities for $\theta_1$ and $\theta_2$ can be, somewhat surprisingly, both greater and smaller than the nominal level.
These asymptotic results show that the bootstrap percentile confidence intervals of $\theta_1$ and $\theta_2$ are no longer always conservative, but can significantly under-cover the corresponding true parameters when the constrained parameter is on, or close to, the restriction boundary. On the other hand,  when the two-sample mean difference $\Delta$ is the parameter of interest, the bootstrap percentile confidence interval of $\Delta$ still performs conservatively. 

\subsection{An illustrative example}
\label{sec3.examples}

To make our asymptotic results practically appealing,
we also apply the results of Theorem \ref{prop4_bootCI}
to a two-sample binomial example with ordered proportions. 

\begin{example} [Two-sample binomial example]
\label{case4.bino}
Suppose we have two independent samples $X_{ij} \sim Binomial(m_i,p_i)$ for $i=1,2$ and $j=i,\ldots,n_i$, with known $m_i$, where $p_1$, $p_2$ are subject to the constraint $p_2 \geq p_1$.
For illustration, we consider $m_1=m_2=1$.
Let $\Delta=p_2-p_1$.
Then the restriction becomes $\Delta \geq 0$.
Further, we set $\eta_0=0.5$ and $\omega=0.25$.
Let  the true values of $p_1$ and $p_2$ be $p_{10,n}=0.5-0.75\Delta_{0,n}$, and $p_{20,n}=0.5+0.25\Delta_{0,n}$, with $\Delta_0=\Delta_{0,n}=\delta  n^{-1/2}$, and $\delta$ being a fixed nonnegative local parameter not depending on $n$.
Under this current setup, $\sigma_0^2=\eta_0(1-\eta_0)=0.5^2$.

Applying Theorem \ref{prop4_bootCI}, we can numerically evaluate the local asymptotic coverage probabilities for $p_1$, $p_2$ and $\Delta$.
The exact coverage probabilities  are calculated
by following the definition of the bootstrap percentile confidence intervals and the properties of binomial random variables.
In Figure~\ref{fig6.coverage.twobinom}, we graph the asymptotic and exact coverage probabilities of the bootstrap percentile confidence intervals for $p_1$, $p_2$ and $\Delta$ versus the true mean difference $\Delta_0$ in the cases of $(n_1,n_2)=(25,75)$ and $(n_1,n_2)=(100,300)$ at nominal level $1-\alpha=0.90$ with $\alpha_1=\alpha_2=0.05$.
For comparison, we also include the exact coverage probabilities of the bootstrap percentile confidence intervals for $p_1$, $p_2$, and $\Delta$
without using the constraint.

\begin{figure} 
\centering
\includegraphics[width=32pc]{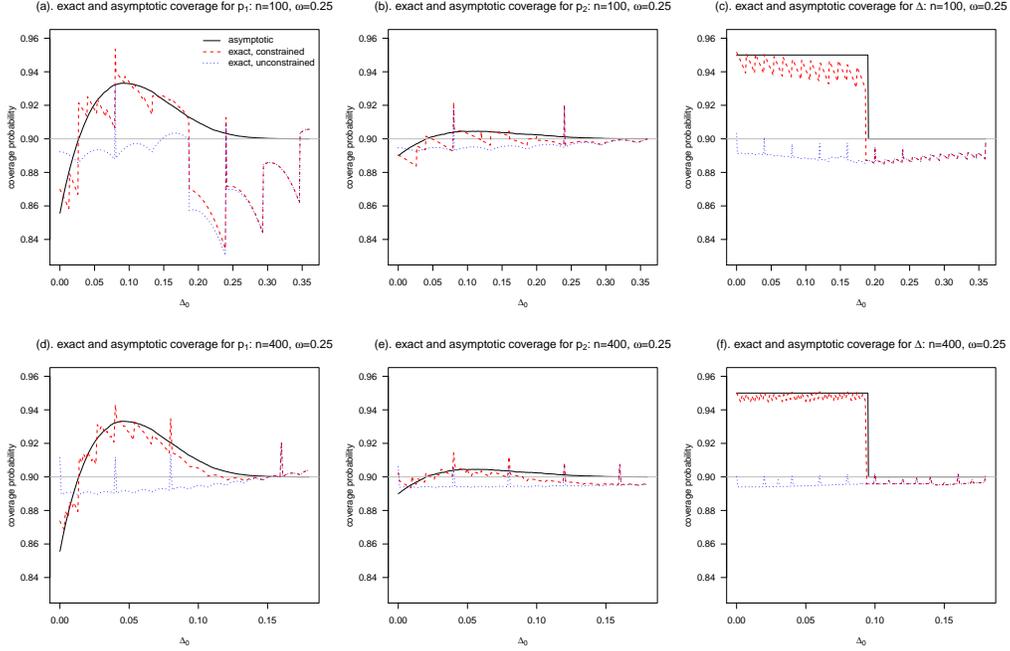}
\caption[]{Exact and asymptotic coverage probabilities of the bootstrap percentile confidence intervals, for the proportions $p_1$ and $p_2$ and their difference $\Delta$ of two binomial distributions with $p_1\leq p_2 $, as a function of $\Delta_0$.
The lines are for quantified local asymptotic coverage probabilities (solid), exact finite sample coverage probabilities (dash) and exact finite sample coverage probabilities without using constraint (dot).
For $\omega=0.25$, top Panels (a)--(c) are for $n=100$, and bottom Panels (d)--(f) are for $n=400$.
For all plots, we set $1-\alpha=0.90$ with $\alpha_1= \alpha_2=0.05$.
}
\label{fig6.coverage.twobinom}
\end{figure}

As we can observe from Figure~\ref{fig6.coverage.twobinom},
the quantified local asymptotic coverage probabilities well capture the general trend of the exact coverage probabilities.
The two coverage probabilities become closer to each other as the sample size increases.
\end{example}


\section{Proofs of the results in Section \ref{sec.case1}}
\label{sec3.one-sample}

\subsection{Proof of Lemma~\ref{lem3_bootCI}}
Recall that the parameter space for $\theta$ is $\mathcal{C}_1=\{\theta: \theta\geq d\}$,
which is a closed convex set.
Then by  Proposition 2.4.3 in \citet[p.~51]{Silva2004},
$\hat\theta_n$
equivalently minimizes
$$
(\bar X_n-\theta)^2
$$
subject to the constraint set $\mathcal{C}_1$.
That is
$$
\hat\theta_n=\arg\min_{\theta \in \mathcal{C}_1}(\bar X_n-\theta)^2=\max(\bar X_n,d).
$$
This finishes the proof of Lemma \ref{lem3_bootCI}.




\subsection{Proof of Lemma~\ref{lem4_bootCI}}
%
%
Recall that, under the proposed local asymptotic framework, $\theta_{0,n}=d+n^{-1/2}\tau$ and $\sigma_0^2=b''(\psi_0)$ with $\psi_0=b'^{-1}(d)$.
Applying the central limit theorem for a triangular array gives
\[
\frac{\sum_{i=1}^n (X_{i}-\theta_{0,n})}{\surd \big({n\sigma_n^2}\big)} \to Z ,
\]
in distribution, as $n\to\infty$, where $\sigma_n^2=b''(\psi_n)$ with $\psi_n=b'^{-1}(\theta_{0,n})$
and $Z\sim N(0,1)$.
Further, both $b'^{-1}(\cdot)$ and $b''(\cdot)$ are continuous functions, and $\theta_{0,n}\to d$ as $n\to\infty$.
Then we have $\sigma_n^2\to\sigma_0^2$ as $n\to\infty$.
By Slutsky's theorem, we have
\ba
\label{case3_clt}
{{n}^{1/2}(\bar X_n-\theta_{0,n})}/{\sigma_0} \to Z ,
\ea
in distribution, as $n\to\infty$.
Together with the continuous mapping theorem, it follows that
\bas
{{n}^{1/2}(\hat\theta_n-\theta_{0,n})}/{\sigma_0} &=& {{n}^{1/2} \{\max( \bar X_n,d) - \theta_{0,n}\} }/{\sigma_0} \\
&=& \max\left\{ {{n}^{1/2}(\bar X_n-\theta_{0,n})}/{\sigma_0}, {{n}^{1/2}(d-\theta_{0,n})}/{\sigma_0} \right\} \\
&\to& \max\left\{Z, -{\tau}/{\sigma_0} \right\},
\eas
in distribution, as $n\to\infty$.
This completes the proof of Lemma \ref{lem4_bootCI}.




\subsection{Proof of Theorem \ref{prop3_bootCI}}
We first recall some notation.
Let $X_1^*,\ldots,X_n^*$
be the bootstrap sample from $f(x;\hat\theta_n)$ for the given $\hat\theta_n$,
and
$
\hat\theta_n^*=\max(\bar X_n^*,d)
$
be the maximum likelihood estimator of $\theta$ based on the bootstrap sample, where
$
\bar X_n^*=\sum_{i=1}^nX_i^*/n.
$
Denote the bootstrap distributions of $\hat\theta_n$
and $\bar X_n$, respectively, by
$$
G_n^*(x;\hat\theta_n)=\Ps\left(\hat\theta_n^*\leq x \mid \hat\theta_n\right),
\quad
\bar G_n^*(x;\hat\theta_n)=\Ps\left(\bar X_n^*\leq x \mid \hat\theta_n\right).
$$
Then
$$
G_n^*(x;\hat\theta_n)
=\left\{
\begin{array}{cc}
\bar G_n^*(x;\hat\theta_n), \quad&x\geq d\\
0,\quad &x<d\\
\end{array}
\right. .
$$
Recall that $q_{\alpha}^*$ is the $\alpha^{\rm th}$ quantile of the bootstrap distribution of $\hat\theta_n$.
It follows that
$$
q_{\alpha}^*=G^{*-1}_n(\alpha;\hat\theta_n)=\max\{\bar G^{*-1}_n(\alpha;\hat\theta_n),d\}.
$$
Therefore
\begin{eqnarray*}
\Pr\left(\theta_{0,n}\in[q^*_{\alpha_1},q^*_{1-\alpha_2}]\right)
&=&
\Pr\left[\max\{\bar G^{*-1}_n(\alpha_1;\hat\theta_n),d\}
\leq \theta_{0,n}\leq\max\{\bar G^{*-1}_n(1-\alpha_2;\hat\theta_n),d\}\right]\\
&=&
\Pr\left\{\bar G^{*-1}_n(\alpha_1;\hat\theta_n)
\leq \theta_{0,n}\leq \bar G^{*-1}_n(1-\alpha_2;\hat\theta_n)\right\}.
\eas
Let \[
\bar H_n^*(x; \hat\theta_n)=\Ps\left\{ {{n}^{1/2}(\bar X_n^*-\hat\theta_n)}/{\sigma_0}\leq x \mid \hat\theta_n\right\},
\]
which is the bootstrap distribution of the standardized $\bar X_n$.
Then
$$
\bar G^{*-1}_n(\alpha;\hat\theta_n)=n^{-1/2}\sigma_0\bar H_n^{*-1}(\alpha; \hat\theta_n)+\hat\theta_n.
$$
Therefore
\begin{eqnarray}
\label{proof.thm1.part0}
\Pr\left(\theta_{0,n}\in[q^*_{\alpha_1},q^*_{1-\alpha_2}]\right)
=\Pr\left\{\bar H_n^{*-1}(\alpha_1;\hat\theta_n)
\leq {{n}^{1/2}(\theta_{0,n}-\hat\theta_n)}/{\sigma_0}\leq \bar H_n^{*-1}(1-\alpha_2;\hat\theta_n)\right\}.\quad
\end{eqnarray}

We next study the asymptotic property of $\bar H_n^{*-1}(\alpha_1;\hat\theta_n)$ in the following lemma, which is very helpful in our proofs.


\begin{lemma}
\label{localquantile}
Under the same setup and assumptions as in Theorem \ref{prop3_bootCI}, we have
\begin{itemize}
\item[](a)~
$\hat\theta_n=d+o_p(1)$ and $\hat\sigma_n^2=\sigma_0^2+o_p(1)$, where
$\hat\sigma_n^2=b''(\hat\psi_n)$
with $\hat\psi_n=b'^{-1}(\hat\theta_{n})$;
\item[](b)~
$\sup_{x}|\bar H_n^*(x; \hat\theta_n)-\Phi(x)|=o_p(1)$;
\item[](c)~
$\bar H_n^{*-1}(\alpha; \hat\theta_n) =  \Phi^{-1}(\alpha)+o_p(1)$
for any given level $\alpha\in(0, 1)$.
\end{itemize}
\end{lemma}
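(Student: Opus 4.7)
The plan is to prove the three claims sequentially, using (a) as a stepping stone for (b), and (b) for (c). For part~(a), I would build on the asymptotics already established in the proof of Lemma~\ref{lem4_bootCI}. The display~(\ref{case3_clt}) together with $\theta_{0,n} = d + \tau n^{-1/2}$ gives $\bar X_n = d + o_p(1)$; applying the continuous mapping theorem to $\max(\cdot, d)$ yields $\hat\theta_n = \max(\bar X_n, d) \to d$ in probability. For the variance claim, $b'^{-1}$ is well-defined and continuous because $b'' > 0$ is assumed, and $b''$ itself is continuous, so a second application of the continuous mapping theorem gives $\hat\sigma_n^2 = b''(b'^{-1}(\hat\theta_n)) \to b''(b'^{-1}(d)) = \sigma_0^2$ in probability.

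For part~(b), the strategy is a conditional bootstrap CLT combined with Polya's theorem. Conditional on $\hat\theta_n$, the bootstrap observations $X_1^*, \ldots, X_n^*$ are i.i.d.\ from $f(x; \hat\theta_n)$ with conditional mean $\hat\theta_n$ and conditional variance $\hat\sigma_n^2$. Because $\hat\sigma_n^2 \to \sigma_0^2 > 0$ in probability by part~(a), a conditional CLT (with a Berry--Esseen-type remainder to furnish uniformity in $x$) gives
$$
\sup_x \Bigl|\Ps\bigl\{n^{1/2}(\bar X_n^* - \hat\theta_n)/\hat\sigma_n \leq x \mid \hat\theta_n\bigr\} - \Phi(x)\Bigr| = o_p(1).
$$
To swap the random normaliser $\hat\sigma_n$ for $\sigma_0$, I would rewrite $\bar H_n^*(x; \hat\theta_n) = \Ps\{n^{1/2}(\bar X_n^* - \hat\theta_n)/\hat\sigma_n \leq x\sigma_0/\hat\sigma_n \mid \hat\theta_n\}$ and apply the triangle inequality, bounding the deviation by the display above plus $\sup_x |\Phi(x\sigma_0/\hat\sigma_n) - \Phi(x)|$. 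Since $\Phi(\cdot\, \sigma_0/\hat\sigma_n)$ is itself a continuous CDF that converges pointwise to $\Phi(\cdot)$ whenever $\hat\sigma_n \to \sigma_0$, Polya's theorem gives the uniform convergence of this second term in probability, completing~(b).

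Finally, for part~(c), the quantile inversion is routine. Given the uniform convergence in~(b) and the fact that $\Phi$ is strictly increasing and continuous, for any $\epsilon > 0$ one can choose $\delta > 0$ with $\Phi(\Phi^{-1}(\alpha) \pm \epsilon) = \alpha \pm \delta'$ for some $\delta' > 0$; with probability tending to one $\bar H_n^*$ is uniformly within $\delta'/2$ of $\Phi$, which forces $\bar H_n^{*-1}(\alpha;\hat\theta_n) \in [\Phi^{-1}(\alpha) - \epsilon, \Phi^{-1}(\alpha) + \epsilon]$. The main obstacle is the uniform conditional CLT in~(b): although the pointwise conditional CLT is immediate once~(a) provides the random variance's limit, upgrading to a statement uniform in $x$ while accommodating the random normaliser $\hat\sigma_n$ requires the Polya decoupling manoeuvre above, and this is the step where care is needed.
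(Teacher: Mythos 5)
Your proposal is correct and follows essentially the same route as the paper: part (a) via the triangular-array CLT and the continuous mapping theorem, part (b) via a conditional Berry--Esseen/CLT bound that is uniform in $x$, and part (c) via the standard quantile-inversion argument (the paper simply cites Lemma 21.2 of van der Vaart). The only cosmetic difference is that where the paper invokes a conditional Slutsky theorem to replace the random normaliser $\hat\sigma_n$ by $\sigma_0$, you carry out that step by hand with the triangle inequality and Polya's theorem, which is an equally valid justification.
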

\begin{proof}
We first consider Part (a).
Note that (\ref{case3_clt})
implies that $\bar X_n-\theta_{0,n}=o_p(1)$.
Recall that $\theta_{0,n}=d+n^{-1/2}\tau$.
Then
$$
\bar X_n=d+o_p(1).
$$
This implies that
$$
\hat\theta_n=\max(\bar X_n,d)=d+o_p(1).
$$
Recall that both $b'^{-1}(\cdot)$ and $b''(\cdot)$ are continuous functions.
By the continuous mapping theorem,
we further have
$$
\hat\sigma_n^2=\sigma_0^2+o_p(1).
$$
This finishes the proof of Part (a).

Next we consider Part (b).
We start with finding the limiting distribution of
 ${{n}^{1/2}(\bar X_n^*-\hat\theta_n)}/{\sigma_0}$ for given $\hat\theta_n$.
 Note that 
$$
 E(X_i^*\mid\hat\theta_n)=\hat\theta_n, \quad   \var(X_i^*\mid\hat\theta_n)=\hat\sigma_n^2.
$$
 Then, by Berry-Esse\'{e}n inequaltiy \citep[Section 3.1, p.~74]{Shao1995}
 or the central limit theorem \citep[Theorem~23.4]{van1998},
we have
$$
\sup_{x\in \mathbb{R}} \left| \Pr\left({{n}^{1/2}(\bar X_n^*-\hat\theta_n)}/{\hat\sigma_n}\leq x\mid \hat\theta_n\right)
-\Phi(x)\right| \to 0,
$$
 in probability.
 Recall that in Part (a), we have shown $\hat\sigma_n\to\sigma_0$ in probability.
 By conditional Slutsky's theorem \citep{Cheng2015},
 we further have
$$
\sup_{x\in \mathbb{R}} \left| \Pr\left({{n}^{1/2}(\bar X_n^*-\hat\theta_n)}/{\sigma_0}\leq x\mid \hat\theta_n\right)
-\Phi(x)\right| \to 0,
$$
in probability, which implies that
$$
\sup_{x\in\mathbb{R}} \left|\bar H_n^*(x; \hat\theta_n)-\Phi(x)\right|=o_p(1).
$$
This finishes the proof of Part (b).
\smallskip

With Part (b), then Part (c) is a direct application of Lemma 21.2 in \cite{van1998}. 
\end{proof}


We now move back to our proof of Theorem \ref{prop3_bootCI}.
Applying Lemma \ref{localquantile}
to (\ref{proof.thm1.part0}) gives
\ba
&&\nonumber\Pr\left(\theta_{0,n}\in[q^*_{\alpha_1},q^*_{1-\alpha_2}]\right) \\
\nonumber&=&
\Pr\left\{-\Phi^{-1}(1-\alpha_2)+o_p(1) \leq {{n}^{1/2}(\hat\theta_n-\theta_{0,n})}/{\sigma_0} \leq -\Phi^{-1}(\alpha_1)+o_p(1) \right\} \\
\nonumber&=&\Pr\left\{\Phi^{-1}(\alpha_2)+o_p(1) \leq {{n}^{1/2}(\hat\theta_n-\theta_{0,n})}/{\sigma_0} \leq \Phi^{-1}(1-\alpha_1)+o_p(1) \right\} \\
\nonumber&=&
\Pr\left\{{{n}^{1/2}(\hat\theta_n-\theta_{0,n})}/{\sigma_0} \leq \Phi^{-1}(1-\alpha_1)+o_p(1) \right\} \\
\label{proof.thm1.part1}&& - \Pr\left\{{{n}^{1/2}(\hat\theta_n-\theta_{0,n})}/{\sigma_0} <\Phi^{-1}(\alpha_2)+o_p(1) \right\}.
\end{eqnarray}
Recall that in Lemma \ref{lem4_bootCI}, we have shown that
$$
{{n}^{1/2}(\hat\theta_n-\theta_{0,n})}/{\sigma_0} \to \max\left(Z,-\tau/\sigma_0\right),
$$
in distribution, as $n\to\infty$.
That is, the limiting distribution of ${{n}^{1/2}(\hat\theta_n-\theta_{0,n})}/{\sigma_0}$
is $\Phi(x)I(x\geq-\tau/\sigma_0)$, which is continuous at $x=\Phi^{-1}(1-\alpha_1)$
and
$x=\Phi^{-1}(\alpha_2)$ if $\Phi^{-1}(\alpha_2)\neq-\tau/\sigma_0$.
By the definition of convergence in distribution, Slusky's theorem,
and (\ref{proof.thm1.part1}),
we have that, if $\Phi^{-1}(\alpha_2)\neq-\tau/\sigma_0$, then
$$
\lim_{n\to\infty}
\Pr\left(\theta_{0,n}\in[q^*_{\alpha_1},q^*_{1-\alpha_2}]\right)
=1-\alpha_1-\alpha_2I\left(\Phi^{-1}(\alpha_2)\geq -\tau/\sigma_0\right).
$$
That is,  for every continuous point of the limiting function, we have
\bas
\lim_{n\to\infty}
\Pr\left(\theta_{0,n}\in[q^*_{\alpha_1},q^*_{1-\alpha_2}]\right)
&=&\left\{
\begin{array}{ll}
1-\alpha_1-\alpha_2, \quad& \Phi^{-1}(\alpha_2)> -\tau/\sigma_0\\
1-\alpha_1, \quad& \Phi^{-1}(\alpha_2)< -\tau/\sigma_0
\end{array}
\right.\\
&=&
\left\{
\begin{array}{ll}
1-\alpha_1-\alpha_2, \quad & \tau/\sigma_0>\Phi^{-1}(1-\alpha_2)\\
1-\alpha_1, \quad & \tau/\sigma_0<\Phi^{-1}(1-\alpha_2)
\end{array}
\right..
\eas
This finish the proof of Theorem \ref{prop3_bootCI}.

\section{Proofs of the results in Section \ref{sec.case2}}
\label{sec4.two-sample}

\subsection{Proof of Lemma \ref{lem7_boot}}

Note that  the parameter space $\mathcal{C}_2=\{(\theta_1,\theta_2):\theta_1 \leq \theta_2\}$
is a closed convex set.
Then by Proposition 2.4.3 in \citet[p.~51]{Silva2004},
$(\hat\theta_{n1},\hat\theta_{n2})$
equivalently minimizes
$$
n_1(\bar X_{n1}-\theta_1)^2 +n_2(\bar X_{n2}-\theta_2)^2.
$$
That is,
\be
\label{isotonic.opt}
(\hat\theta_{n1},\hat\theta_{n2})=\arg\min_{(\theta_1,\theta_2)\in \mathcal{C}_2} \left\{ n_1(\bar X_{n1}-\theta_1)^2 +n_2(\bar X_{n2}-\theta_2)^2 \right\}.
\ee
To identify the forms of $(\hat\theta_{n1},\hat\theta_{n2})$,
we consider the following reparameterization
\[
\theta_1 = \eta - (1-\omega)\Delta, \quad \theta_2 = \eta + \omega\Delta,
\]
or equivalently
$$
\eta=\omega\theta_1+(1-\omega)\theta_2, \quad \Delta=\theta_2-\theta_1.
$$
Under the above reparameterization, the constraint $\theta_2\geq\theta_1$ then becomes $\Delta \geq 0$, while $\eta$ is free of restriction.
For the optimization problem (\ref{isotonic.opt}),
the maximum likelihood estimator of $(\eta,\Delta)$ is
$$
(\hat\eta_{n},\hat\Delta_{n})
=\arg\min_{(\eta,\Delta)}\left[n_1\left\{\bar X_{n1}-\eta +(1-\omega)\Delta\right\}^2+n_2(\bar X_{n2}-\eta - \omega\Delta)^2\right]
$$
subject to the constraint $\Delta\geq0$.
After some algebra, we find
$$
(\hat\eta_{n},\hat\Delta_{n})
=\arg\min_{(\eta,\Delta)}\left[n\left\{\eta-\omega\bar X_{n1}-(1-\omega)\bar X_{n2} \right\}^2+n\omega(1-\omega)\{\Delta-(\bar X_{n2}-\bar X_{n1})\}^2\right]
$$
subject to the constraint $\Delta\geq0$.
Hence,
$$
\hat\eta_{n}=\omega\bar X_{n1}+(1-\omega)\bar X_{n2},
\quad
\hat\Delta_n=(\bar X_{n2}-\bar X_{n1})^+,
$$
which implies that
$$\hat\theta_{n1}=\min\left\{\bar X_{n1},\omega \bar X_{n1}+(1-\omega)\bar X_{n2} \right\},
\quad
\hat\theta_{n2}=\max\left\{\bar X_{n2},\omega \bar X_{n1}+(1-\omega)\bar X_{n2} \right\}.
$$
This finishes the proof of Lemma \ref{lem7_boot}.




\subsection{Proof of Lemma \ref{lem8_boot}}
For the convenience of presentation, we introduce some compact notation.
Write $\theta_{0,n}=(\theta_{10,n},\theta_{20,n})^\top$, $\bar X_n=(\bar X_{n1},\bar X_{n2}) ^\top$ and $\hat\theta_{n}=(\hat\theta_{n1},\hat\theta_{n2})^\top$.
Let
\[
\bU_n=(U_{n1}, U_{n2})^\top=\frac{{n}^{1/2}(\bar X_n-\theta_{0,n})}{\sigma_0}.
\]
Write  $W_n=\omega U_{n1}+(1-\omega)U_{n2}$.
Recall that
$\theta_{10,n}=\eta_0-(1-\omega)\Delta_{0,n}$ and $\theta_{20,n}=\eta_0+\omega\Delta_{0,n}$
with $\Delta_{0,n}=\delta  n^{-1/2}$,  and $\sigma_{0}^2=b''(\psi_{0})$ with $\psi_{0}=b'^{-1}(\eta_0)$.

Applying the central limit theorem for a triangular array,
we have
\[
\left(
\begin{array}{cc}
\frac{\sum_{j=1}^{n_1} (X_{1j}-\theta_{10,n})}{\surd{(n_1\sigma_{1,n}^2})}\\
\frac{\sum_{j=1}^{n_2} (X_{2j}-\theta_{20,n})}{\surd{(n_2\sigma_{2,n}^2})}\\
\end{array}
\right)
\to
\left(
\begin{array}{cc}
Z_1\\
Z_2\\
\end{array}
\right) ,
\]
in distribution, as $n\to\infty$, where $\sigma_{i,n}^2=b''(\psi_{i,n})$ with $\psi_{i,n}=b'^{-1}(\theta_{i0,n})$.
Further, we have $\sigma_{i,n}^2\to\sigma_0^2$ as $n\to\infty$,
since $\theta_{i0,n}\to\eta_0$ as $n\to\infty$
and both $b'^{-1}(\cdot)$ and $b''(\cdot)$ are continuous functions.
Therefore, by Slutsky's theorem, we have
\[
\bU_n
\to
\left(
\begin{array}{cc}
{\omega}^{-1/2}Z_1\\
{(1-\omega)}^{-1/2}Z_2\\
\end{array}
\right) ,
\]
in distribution, as $n\to \infty$.

We now come to the limiting distribution of
 ${n^{1/2}(\hat\theta_{n1}-\theta_{10,n})}/{\sigma_0}$.
Using the form of $\hat\theta_{n1}$, we have
\bas
&&{n^{1/2}(\hat\theta_{n1}-\theta_{10,n})}/{\sigma_0} \\
&=& n^{1/2} \left\{ {\min\left\{\bar X_{n1},\omega \bar X_{n1}+(1-\omega)\bar X_{n2} \right\}-\theta_{10,n}}/{\sigma_0} \right\} \\
&=& \min \left\{ {n^{1/2} \left(\bar X_{n1}-\theta_{10,n}\right)}/{\sigma_0}, {n^{1/2}\left(\omega \bar X_{n1}+(1-\omega)\bar X_{n2} -\theta_{10,n}\right)}/{\sigma_0} \right\} \\
&=& \min \left\{ U_{n1}, W_n+ {n^{1/2}\left(\eta_0 - \theta_{10,n}\right)}/{\sigma_0} \right\} \\
&\to& \min\left\{ {\omega}^{-1/2}Z_1, {\omega}^{1/2}Z_1+(1-\omega)^{1/2}Z_2 + {(1-\omega)\delta}/{\sigma_0} \right\}
\eas
in distribution, as $n\to\infty$.

The proof for ${n^{1/2}(\hat\theta_{n2}-\theta_{20,n})}/{\sigma_0}$ is similar, and hence is omitted.
For $\hat\Delta_n$, the proof is similar to that of Lemma \ref{lem4_bootCI}, and hence is also omitted.
This finishes the proof of Lemma \ref{lem8_boot}.




\subsection{Proof of Theorem \ref{prop4_bootCI}}
Next we concentrate on the proof of Part (a) of Theorem \ref{prop4_bootCI} as the proof of Part (b) is just parallel.
Also, the proof of Part (c) is similar to that of Theorem \ref{prop3_bootCI}, and hence is omitted.

We first recall some notation.
Recall that $X_{11}^*,\ldots,X_{1n_1}^*$
is the bootstrap sample from $f(x;\hat\theta_{n1})$ for given $\hat\theta_{n1}$,
and
$X_{21}^*,\ldots,X_{2n_2}^*$
is the bootstrap sample from $f(x;\hat\theta_{n2})$ for given $\hat\theta_{n2}$.
Further, recall that
$(\hat\theta_{n1}^*,\hat\theta_{n2}^*)$
is the maximum likelihood estimator of $(\theta_1,\theta_2)$
based on $X_{ij}^*$'s.
Then
$$\hat\theta_{n1}^*=\min\left\{\bar X_{n1}^*,\omega \bar X_{n1}^*+(1-\omega)\bar X_{n2}^* \right\},
\quad
\hat\theta_{n2}^*=\max\left\{\bar X_{n2}^*,\omega \bar X_{n1}^*+(1-\omega)\bar X_{n2} ^*\right\},
$$
where $\bar X_{ni}^*=\sum_{j=1}^{n_i}X_{ij}^*/n_i$, $i=1,2$.
Recall that the bootstrap distribution of $\hat\theta_{n1}$ is denoted
by
$$
G_{n1}^*(x;\hat\theta_n)
=\Ps(\hat\theta_{n1}^*\leq x \mid \hat\theta_{n}),
$$
and the corresponding $\alpha$-quantile is denoted  by $q_{1,\alpha}^*$.

Next, we mainly consider $\Pr\left(\theta_{10,n}\geq q_{\alpha_1}^*\right)$ in Part (a).
The other side can be similarly proved.
Note that
\ba
\label{thm2.proof.partb.part0}
\Pr\left(\theta_{10,n}\geq q_{\alpha_1}^*\right)
&=&
\Pr\left\{\alpha_1\leq  G_{n1}^*(\theta_{10,n};\hat\theta_n)\right\}.
\ea
For $G_{n1}^*(\theta_{10,n};\hat\theta_{n})$,
we have
\bas
G_{n1}^*(\theta_{10,n};\hat\theta_n)
&=&
\Ps\left[\min\left\{\bar X_{n1}^*,\omega \bar X_{n1}^*+(1-\omega)\bar X_{n2}^* \right\} \leq\theta_{10,n} \mid \hat\theta_n \right] \\
&=&
\Ps\left( \bar X_{n1}^* \leq\theta_{10,n} \mid \hat\theta_n \right)
+\Ps\left( \omega \bar X_{n1}^*+(1-\omega)\bar X_{n2}^* \leq\theta_{10,n} \mid \hat\theta_n \right) \\
&&-\Ps\left\{ \bar X_{n1}^* \leq\theta_{10,n}, \omega\bar X_{n1}^*+(1-\omega)\bar X_{n2}^* \leq\theta_{10,n} \mid \hat\theta_n \right\}.
\eas
For $ i=1,2$, let
$$
U_{ni}^*={n^{1/2} \left(\bar X_{ni}^*-\hat\theta_{ni}\right)}/{\sigma_0},
$$
and
$
W_{n}^*=\omega U_{n1}^*+(1-\omega)U_{n2}^*.
$
After some algebra, we have
\ba
\nonumber
G_{n1}^*(\theta_{10,n};\hat\theta_n)
&=&
\Ps\left(U_{n1}^*\leq -U_{n1} \mid \hat\theta_n \right)
+ \Ps\left\{ W_n^* \leq -W_n-(1-\omega)\delta/\sigma_0 \mid \hat\theta_n\right\} \\
\label{thm2.proof.part0}&&
-\Ps\left\{U_{n1}^*\leq -U_{n1}, W_n^* \leq -W_n-(1-\omega)\delta/\sigma_0 \mid \hat\theta_n \right\}.
\ea

In next lemma, we study the asymptotic properties of
$$
\Ps\left(U_{n1}^*\leq x \mid \hat\theta_n \right),\quad
\Ps\left( W_n^* \leq x \mid \hat\theta_n\right),\quad
\Ps\left(U_{n1}^*\leq x,
W_n^* \leq y \mid \hat\theta_n\right),
$$
which are very helpful in our proofs.

\begin{lemma}
\label{localquantile2}
Under the same setup and assumptions as in Theorem \ref{prop4_bootCI}, we have
\ba
\label{local2.eq1}
&&
\sup_{x\in\mathbb{R}}\left|\Ps\left({\omega}^{1/2}U_{n1}^*\leq x \mid \hat\theta_n \right)-\Phi(x)\right|=o_p(1),\\
\label{local2.eq2}
&&
\sup_{x\in\mathbb{R}}\left|\Ps\left( W_n^* \leq x \mid \hat\theta_n\right)-\Phi(x)\right|=o_p(1),\\
\label{local2.eq3}
&&
\sup_{(x,y)\in \mathbb{R}^2}\left|\Ps\left({\omega}^{1/2}U_{n1}^*\leq x, W_n^* \leq y \mid \hat\theta_n\right)-\bPhi_{(\szero,\bsLambda_1)}(x,y)\right|=o_p(1).
\ea
\end{lemma}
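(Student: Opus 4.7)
The plan is to mirror the one-sample argument from the proof of Lemma~\ref{localquantile}, adapted to two samples and then lifted to a joint distributional statement. The crucial structural fact is that, conditional on $\hat\theta_n=(\hat\theta_{n1},\hat\theta_{n2})$, the two bootstrap samples are drawn independently, so $U_{n1}^*$ and $U_{n2}^*$ are conditionally independent; this reduces every claim to a one-dimensional bootstrap CLT applied twice, followed by a continuity/P\'olya step for the joint statement.

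I would begin by establishing consistency: $\hat\theta_{ni}=\eta_0+o_p(1)$ and $\hat\sigma_{ni}^2=\sigma_0^2+o_p(1)$ for $i=1,2$. The first follows from the triangular-array CLT, which gives $\bar X_{ni}=\theta_{i0,n}+o_p(1)=\eta_0+o_p(1)$, combined with the explicit min/max form of $\hat\theta_{ni}$ from Lemma~\ref{lem7_boot}, since the min/max of two copies of $\eta_0+o_p(1)$ remains $\eta_0+o_p(1)$; the second then follows from the continuous mapping theorem applied to $b''\circ b'^{-1}$. For (\ref{local2.eq1}) I would apply the Berry--Esse\'en inequality conditionally on $\hat\theta_n$: $\bar X_{n1}^*$ is the mean of $n_1=\omega n$ i.i.d.\ draws with conditional mean $\hat\theta_{n1}$ and variance $\hat\sigma_{n1}^2$, yielding uniform conditional convergence in probability of $n_1^{1/2}(\bar X_{n1}^*-\hat\theta_{n1})/\hat\sigma_{n1}=\omega^{1/2}U_{n1}^*\cdot(\sigma_0/\hat\sigma_{n1})$ to $\Phi$, and conditional Slutsky (as invoked in the proof of Lemma~\ref{localquantile}(b)) replaces $\hat\sigma_{n1}$ by $\sigma_0$. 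Statement (\ref{local2.eq2}) is then obtained by the same Berry--Esse\'en machinery applied directly to $W_n^*$, viewed as a centred sum of $n_1+n_2=n$ conditionally independent bootstrap observations with normalising variance $\omega\hat\sigma_{n1}^2+(1-\omega)\hat\sigma_{n2}^2\to\sigma_0^2$.

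For the joint statement (\ref{local2.eq3}), conditional independence of the two bootstrap samples makes the joint conditional cdf of $(\omega^{1/2}U_{n1}^*,(1-\omega)^{1/2}U_{n2}^*)$ factor as the product of its marginals, so the preceding step gives
\[
\sup_{(x,y)\in\mathbb{R}^2}\bigl|\Ps\bigl(\omega^{1/2}U_{n1}^*\le x,\,(1-\omega)^{1/2}U_{n2}^*\le y\mid\hat\theta_n\bigr)-\Phi(x)\Phi(y)\bigr|=o_p(1),
\]
since uniform convergence of each $[0,1]$-valued factor transfers to uniform convergence of the product. Because $(\omega^{1/2}U_{n1}^*,W_n^*)$ is a fixed nonsingular linear transformation of $(\omega^{1/2}U_{n1}^*,(1-\omega)^{1/2}U_{n2}^*)$ whose covariance matrix computes to $\bLambda_1$, a change of variables converts this into pointwise-in-probability convergence of the joint cdf to $\bPhi_{(\szero,\bsLambda_1)}(x,y)$; P\'olya's theorem in two dimensions, applicable since the limiting bivariate normal cdf is jointly continuous, then upgrades this to the uniform statement (\ref{local2.eq3}).

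The main obstacle is securing uniformity in the joint claim: a direct bivariate Berry--Esse\'en bound with random normalisation would be delicate and would require tracking a multivariate remainder. The route above sidesteps this by reducing everything to uniform one-dimensional bootstrap CLTs (handled by the same Berry--Esse\'en plus conditional Slutsky toolkit already used in Lemma~\ref{localquantile}), exploiting conditional independence to factor the joint cdf, and only at the last step invoking P\'olya's theorem, whose sole hypothesis---continuity of the limiting cdf---is automatic for the bivariate normal with nondegenerate $\bLambda_1$.
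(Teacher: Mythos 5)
Your proposal is correct and follows essentially the same route as the paper: one-dimensional Berry--Esse\'en plus conditional Slutsky for the marginals, conditional independence of the two bootstrap samples to obtain the joint limit for $(\omega^{1/2}U_{n1}^*,(1-\omega)^{1/2}U_{n2}^*)$, and a linear transformation to pass to $(\omega^{1/2}U_{n1}^*,W_n^*)$ with covariance $\bLambda_1$. The only cosmetic differences are that the paper handles the final transformation step by citing Example 3.3 of Shao and Tu (1995) where you spell out the change of variables plus P\'olya's theorem, and that you prove (\ref{local2.eq2}) directly rather than reading it off as a marginal of (\ref{local2.eq3}).
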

\begin{proof}
Similar to the proof as in Part (b) of Lemma \ref{localquantile},
we can show that
$$
\sup_{x\in \mathbb{R}} \left| \Pr\left({\omega}^{1/2}U_{n1}^*\leq x \mid\hat\theta_n\right)
-\Phi(x)\right| \to 0,
$$
in probability, and
$$
\sup_{x\in \mathbb{R}} \left| \Pr\left((1-\omega)^{1/2}U_{n2}^*\leq x\mid\hat\theta_n\right)
-\Phi(x)\right| \to 0,
$$
in probability.
Further, conditional on $\hat\theta_n$,
${\omega}^{1/2}U_{n1}^*$
and $(1-\omega)^{1/2}U_{n2}^*$
are independent.
Hence,  we have
$$
\sup_{(x,y)\in \mathbb{R}^2} \left| \Pr\left({\omega}^{1/2}U_{n1}^*\leq x, (1-\omega)^{1/2}U_{n2}^*\leq y \mid\hat\theta_n\right)
-\Phi_{(\szero,\bI_{2\times 2})}(x,y)\right| \to 0,
$$
in probability, where $\bI_{2\times 2}$ is a two-by-two identity matrix.
Then by Example 3.3 of \cite{Shao1995}, we have,  that
$$
\sup_{(x,y)\in \mathbb{R}^2} \left| \Pr\left({\omega}^{1/2}U_{n1}^*\leq x, W_n^*\leq y \mid\hat\theta_n\right)
-\Phi_{(\szero,\bsLambda_1)}(x,y)\right| \to 0,
$$
in probability.
This implies (\ref{local2.eq1})--(\ref{local2.eq3}) and finishes the proof.
\end{proof}

Now we move back to  the proof of Theorem \ref{prop4_bootCI}.
Combining Lemma \ref{localquantile2} and the form of $G_{n1}^*(\theta_{10,n};\hat\theta_n)$ in (\ref{thm2.proof.part0}),
we obtain
\ba
\nonumber
G_{n1}^*(\theta_{10,n};\hat\theta_n)
&=&
\Phi(-{\omega}^{1/2}U_{n1})
+ \Phi\left(-W_n-{(1-\omega)\delta}/{\sigma_0}\right) \\
\nonumber&&
-\bPhi_{(\szero,\bsLambda_1)}\left(-{\omega}^{1/2}U_{n1}, -W_n-{(1-\omega)\delta}/{\sigma_0}\right) +o_p(1) \\
\nonumber&=&
\Phi\left\{-C_{11}(U_{n1})\right\} + \Phi\left\{-C_{12}(U_{n1},U_{n2}) \right\}\\
\nonumber&& - \bPhi_{(\szero,\bsLambda_1)}\left\{-C_{11}(U_{n1}), -C_{12}(U_{n1},U_{n2})\right\}+o_p(1)\\
&=&g_1(U_{n1},U_{n2})+o_p(1),
\label{thm2.proof.partb.part2}
\ea
where $g_1(x,y)$ is defined in Part (a) of Theorem \ref{prop4_bootCI}.

Combining (\ref{thm2.proof.partb.part0}) and (\ref{thm2.proof.partb.part2})
and applying Slutsky's theorem,
we further have
\ba
\nonumber
\lim_{n\to\infty}\Pr\left(\theta_{10,n}\geq q_{\alpha_1}^*\right)&=&
\lim_{n\to\infty}
\Pr\left\{
\alpha_1\leq g_1(U_{n1},U_{n2})+o_p(1)
\right\}\\
\label{thm2.proof.partb.part3}
&=&\iint I\{\alpha_1\leq g_1(x,y)\} dF_{12}(x,y).~~~~
\ea
Similarly, we find
\ba
\lim_{n\to\infty}\Pr\left(\theta_{10,n}\leq q_{1-\alpha_2}^*\right)
\label{thm2.proof.partb.part4}
&=&\iint I\{g_1(x,y)\leq 1-\alpha_2\} dF_{12}(x,y).
\ea
Combining (\ref{thm2.proof.partb.part3}) and (\ref{thm2.proof.partb.part4}),
we get
\bas
\nonumber
\lim_{n\to\infty}\Pr\left(q_{\alpha_1}^*\leq \theta_{10,n}\leq q_{1-\alpha_2}^*\right)&=&
\iint I\{\alpha_1\leq g_1(x,y)\leq 1-\alpha_2\} dF_{12}(x,y).
\eas
This finishes the proof of Theorem \ref{prop4_bootCI}.

\section*{Acknowledgements}
This research was partially supported by the Natural Sciences and Engineering Research Council of Canada.

%
%
%
%
%
%
%

\bibliographystyle{imsart-nameyear} 
\bibliography{bootCI}

\end{document}